\newtheorem*{theorem*}{Theorem}
\theoremstyle{plain}
\newtheorem{thm}{Theorem}[section]
\newtheorem{lem}[thm]{Lemma}
\newtheorem{prop}[thm]{Proposition}
\newcommand{\bvert}{\raisebox{-2.5pt}{\vrule width 2pt height 12pt}}
\def\GetOne(#1,#2,#3,#4,#5,#6,#7,#8)#9{\def#9{#1}}
\def\GetTwo(#1,#2,#3,#4,#5,#6,#7,#8)#9{\def#9{#2}}
\def\GetThree(#1,#2,#3,#4,#5,#6,#7,#8)#9{\def#9{#3}}
\def\GetFour(#1,#2,#3,#4,#5,#6,#7,#8)#9{\def#9{#4}}
\def\GetFive(#1,#2,#3,#4,#5,#6,#7,#8)#9{\def#9{#5}}
\def\GetSix(#1,#2,#3,#4,#5,#6,#7,#8)#9{\def#9{#6}}
\def\GetSeven(#1,#2,#3,#4,#5,#6,#7,#8)#9{\def#9{#7}}
\def\GetEight(#1,#2,#3,#4,#5,#6,#7,#8)#9{\def#9{#8}}
\colorlet{HBlue}{teal!75!blue!95!white}
\colorlet{HOrange}{orange!80!black}
\renewcommand*\env@matrix[1][c]{\hskip -\arraycolsep
  \let\@ifnextchar\new@ifnextchar
  \array{*\c@MaxMatrixCols #1}}
\providecommand{\keywords}[1]{\noindent \textit{Keywords:} #1}
\providecommand{\subject}[1]{\noindent \textit{Mathematics Subject Classification:} #1}
\title{Dihedral f-Tilings of the Sphere Induced by the M\"obius Triangle $(2,3,4)$}
\newcommand\CoAuthorMark{\footnotemark[\arabic{footnote}]} 
\author[1]{Catarina Avelino\thanks{This research was partially financed by Portuguese Funds through FCT (Fundação para a Ciência e a Tecnologia) within the projects UIDB/00013/2020 and UIDP/00013/2020 of CMAT-UTAD, Center of Mathematics of University of Minho, and projects UIDB/04621/2020 and UIDP/04621/2020 of CEMAT/IST-ID, Center for Computational and Stochastic Mathematics, Instituto Superior Técnico, University of Lisbon.}}
\author[2]{Hoi Ping Luk}
\author[3]{Altino Santos\protect\CoAuthorMark}
\affil[1,3]{\normalsize University of Tr\'as-os-Montes and Alto Douro, Vila Real, Portugal}
\affil[2]{\normalsize Z\'{a}pado\v{c}esk\'a univerzita v Plzni, Pilsen, Czech Republic}
\begin{document}

\date{\vspace{-1.4cm}}
\maketitle
\let\thefootnote\relax\footnotetext{E-mail addresses: cavelino@utad.pt (C. Avelino), hoi@connect.ust.hk (H. Luk), afolgado@utad.pt (A. Santos)}

\begin{abstract} We classify the special families of dihedral folding tilings of the sphere derived from the M\"obius triangle $(2,3,4)$. Our study emerges from the study of isometric foldings in the Riemann sphere and meets at the juncture of the triangle group $\Delta(2,3,4)$. The juxtaposition enables us to apply the classification theorem of edge-to-edge tilings of the sphere by congruent triangles and introduces a group theoretical method. The two prototiles of each family consist of the M\"obius triangle and another polygon induced by a reflection of the triangle group acting on the M\"obius triangle. To enumerate the tilings, we give two solutions to solve the associated constraint satisfaction problem. The methods are not exclusive to this problem and therefore applicable to similar problems of more general settings. 
\end{abstract}

\keywords{Spherical tilings, Dihedral f-tilings, isometric foldings, M\"obius triangle, Spherical symmetry} \\

\subject{05B45, 52C20, 51M09, 51M20} \\


\section{Introduction}

Tilings have been continuously studied and have continued to fascinate for centuries. Milestones include the classification of the Wallpaper groups \cite{fed,pol}, the solution to Hilbert's 18$^{\text{th}}$ problem \cite{bie, hee, rei}, the classification of the isohedral tilings of the plane \cite{gs}, Penrose tilings \cite{pen}, and most recently the classification of edge-to-edge monohedral tilings of the sphere \cite{awy, ay, cl, cly, cly2, gsy, ua, wy, wy2}, as well as the discovery of aperiodic monotiles \cite{smkg} of the plane. 

The essence of many tiling problems is about the existence of tilings under a set of constraints.  Hence they can be simply formulated as constraint satisfaction problems. However, they are often difficult to solve. Powerful tools are required. One of them is symmetry. Notably, symmetry is at the heart of the tiling problem of the plane mentioned before. Despite the immense challenges in the spherical counterparts, symmetry has made cameo appearances only as a consequence of the solutions. 

In this paper, our interest centres on the classification of spherical tilings by deploying symmetry as a tool on a particular set of constraints (to be stated below). Facing similar problems, our specific venture helps to shed light on the methodology: applying symmetry as a tool is not only advantageous but also out of necessity. 

The spherical tilings we study are {\em dihedral}, which means that in each tiling some tiles are congruent to one polygon and the rest of the tiles are congruent to a different polygon. The polygons are called the {\em prototiles} of a tiling. If a tiling has exactly one prototile, then it is called {\em monohedral}.

An {\em edge-to-edge} tiling means that no vertex lies in the interior of an edge. 

We focus on spherical dihedral tilings of {\em folding-type} (or {\em f-tilings} for short), which means that they are edge-to-edge,  all 
vertices have even degree $\ge4$ and the sums of alternate angles at each vertex are $\pi$.  The sums of alternate angles at a vertex of even degree means that for the $2k$ (for some integer $k\ge2$) angles labeled in cyclic order $\alpha_1, \alpha_2, ..., \alpha_{2k}$, we have
\begin{align}\label{Eq-alt-ang-sums}
\sum_{i=1}^k \alpha_{2i-1} = \sum_{i=1}^k \alpha_{2i} = \pi.
\end{align}
Obviously, the degree of a vertex is necessarily even if the condition on the sums of alternate angles is satisfied. 

The subject of this paper is the dihedral f-tilings having one prototile being the M\"obius triangle $(2,3,4)$ and another prototile being {\em induced} by the M\"obius triangle via the reflections of the triangle group $\Delta(2,3,4)$. The M\"obius triangle with edge labels $a,b,c$ (also denoted by $\Vert$, \textcolor{blue!80!black}{$\bvert$} and \textcolor{red}{$\vert$} respectively) and their opposite angles $\alpha, \beta, \gamma$ is illustrated the first picture of Figure \ref{Fig:prototiles-kite-Mobius-tri}. The angle values are $\alpha=\tfrac{1}{4}\pi$, $\beta=\tfrac{1}{3}\pi$ and $\gamma=\tfrac{1}{2}\pi$. The {\em induced prototile} is one of the three cases of gluing two mirror images of the M\"obius triangle along an edge: a kite (the second picture) along the $c$-edge, an isosceles triangle (the third picture) along the $b$-edge, or an isosceles triangle (the fourth picture) along the $a$-edge. The angles of the kite are denoted by $\alpha^2, \beta^2, \gamma, \gamma$, where $\alpha^2$ (resp. $\beta^2$) denotes $2$ copies of $\alpha$ (resp. $\beta$). The angle notations in the isosceles triangles are defined similarly. Let $\bar{x}=2x$ for $x=a,b$. Then the prototiles are referred to as {\em the M\"obius triangle}, {\em the kite} and {\em the isosceles triangles $\triangle \bar{a}c^2, \triangle \bar{b}c^2$}. The dihedral f-tilings with the M\"obius triangle and one of the three induced prototiles are referred to as the dihedral f-tilings {\em induced by} the M\"obius triangle.  Since two prototiles are often specified in tandem, the term ``prototiles" is dropped whenever it is clear and obvious in the context.

The other ways to generate the second prototile from the M\"obius triangle are through glided reflections along an edge. The dihedral f-tilings having the M\"obius triangle and such a second prototile have been classified in \cite{brealt}.

\begin{figure}[h!] 
\centering
\begin{tikzpicture}

\tikzmath{
\s=2;
\r=1;
\h=1;
}

\begin{scope}[] 

\foreach \aa in {-1} {

\tikzset{xscale=\aa}

\draw[double, line width=0.5]
	(90:\r) -- (0:\r)
;

\draw[blue!80!black, thick]
	(270:1.25*\r) -- (0:\r)
;

\draw[red]
	(90:\r) -- (270:1.25*\r)
;

\node at (0.7*\r,0.7*\r) {\small $a$};
\node at (0.7*\r,-0.7*\r) {\small $b$};
\node at (-0.25*\r,-0.05*\r) {\small $c$};

\node at (0.15*\r,0.5*\r) {\small $\beta$};
\node at (0.175*\r,-0.75*\r) {\small $\alpha$};
\node at (0.65*\r,-0.05*\r) {\small $\gamma$};

}

\end{scope}

\begin{scope}[xshift=\s cm] 

\foreach \aa in {-1,1} {

\tikzset{xscale=\aa}

\draw[red!20]
	(0,0.8*\r) -- (0,\r)
	(0,0.4*\r) -- (0,-0.6*\r)
	(0,-\r) -- (0,-1.25*\r)
;

\draw[double, line width=0.5]
	(90:\r) -- (0:\r)
;

\draw[blue!80!black, thick]
	(270:1.25*\r) -- (0:\r)
;

\node at (0.7*\r,0.7*\r) {\small $a$};
\node at (0.7*\r,-0.7*\r) {\small $b$};

}

\node at (90:0.55*\r) {\small $\beta^2$};
\node at (270:0.8*\r) {\small $\alpha^2$};
\node at (0.65*\r,-0.05*\r) {\small $\gamma$};
\node at (-0.65*\r,-0.05*\r) {\small $\gamma$};

\end{scope}

\begin{scope}[xshift=2.5*\s cm] 

\draw[blue!80!black!25, thick]
	(0,0.625*\r) -- (0,0.35*\r)
	(0,-0.625*\r) -- (0,-0.05*\r)
;

\foreach \aa in {-1,1} {

\tikzset{xscale=\aa}

\draw[double, line width=0.5]
	(0,-0.625*\r) -- (\r,-0.625*\r) 
;

\draw[red]
	(0,0.625*\r) -- (\r,-0.625*\r) 
;

\node at (0.75*\r,0*\r) {\small $c$};

\node at (0.6*\r,-0.45*\r) {\small $\beta$};

}

\node at (0, -0.95*\r) {\small $\bar{a}=2a$};

\node at (0,0.2*\r) {\small $\alpha^2$};

\end{scope}

\begin{scope}[xshift=4.15*\s cm] 

\draw[double, line width=0.5, black!25]
	(0,0.5*\r) --  (0,0.3*\r)
	(0,-0.5*\r) -- (0,-0.05*\r)
;

\foreach \aa in {-1,1} {

\tikzset{xscale=\aa}

\draw[red]
	(0,0.5*\r) -- (1.25*\r,-0.5*\r) 
;

\draw[blue!80!black, thick]
	(0,-0.5*\r) -- (1.25*\r,-0.5*\r) 
;

\node at (0.75*\r,0.2*\r) {\small $c$};

\node at (0.75*\r,-0.35*\r) {\small $\alpha$};

}

\node at (0, -0.85*\r) {\small $\bar{b}=2b$};

\node at (0,0.1*\r) {\small $\beta^2$};

\end{scope}

\end{tikzpicture}
\caption{The M\"obius triangle $\triangle abc$, the kite $\square a^2b^2$, and $\alpha=\tfrac{1}{4}\pi$, $\beta=\tfrac{1}{3}\pi$ and $\gamma=\tfrac{1}{2}\pi$ and the isosceles triangles $\triangle \bar{a}c^2$ and $\triangle\bar{b}c^2$}
\label{Fig:prototiles-kite-Mobius-tri}
\end{figure}
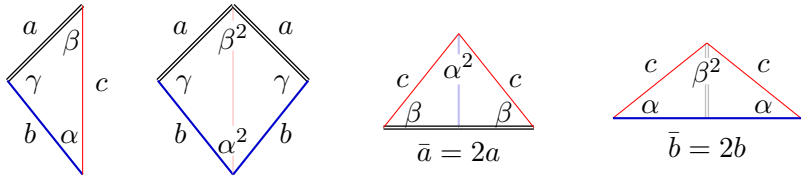

The hypothesis of the subject can be traced back to the juxtaposition of two well-studied mathematical topics with a wealth of impacts. The obvious one is the geometric realisation of the triangle groups, $\Delta (2,3,4)$ in our case, via a sequence of reflections across the edges of the M\"obius triangle. Thus the second prototile arises naturally from the realisation. The other is the study of isometric foldings in Riemannian manifolds \cite{rob} in which the singularities are realised by the embedded f-tilings. Hence the classifications of f-tilings lay the foundation for the theory thereof.

Historically, the monohedral f-tilings have been classified \cite{bre}. The dihedral f-tilings have been classified for the prototiles being a kite and a regular/isosceles triangle \cite{as, as2, as3, as4}. Our choice of prototiles is one step of a natural progression into an unchartered territory, at least to our best knowledge.  On one hand,  the challenges from a scalene triangle prototile are well documented in the classification of monohedral tilings \cite{cly, ua} and in dihedral f-tilings \cite{as5}. On the other hand, quadrilateral prototiles with few distinct edge lengths are typically difficult to handle \cite{cly}. We testify this conviction in the subject of this paper and demonstrate two new methods, one brings symmetry and group theory to the forefront and the other applies graph theory. This paper also generalises the method in \cite{as2015,as6}.

From now on, by the {\em dihedral f-tilings} we refer to the subject of the paper, which is organised as follows. The main result will be explained in Section \ref{Sec:Results}, and followed by a proof in Section \ref{Sec:Sym} using symmetry as well as an alternative proof in Section \ref{Sec:Graph} via graph isomorphism. The planar representations and the links for the corresponding 3D models are included in the Appendix (Section \ref{Sec:Appendix}).  In this section we also present the geometric and combinatorial structure of the dihedral f-tilings induced by the M\"obius triangle (2, 3, 4). 

\section{Main Results}
\label{Sec:Results}


\begin{theorem*}\label{main_th} There are a total of $123$ dihedral f-tilings induced by the M\"obius triangle $(2,3,4)$.  Among them, there are 
\begin{enumerate}
\item $104$ dihedral f-tilings if the prototiles are the M\"obius triangle and the kite; and
\item $12$ dihedral f-tilings if the prototiles are the M\"obius triangle and the isosceles triangle $\triangle \bar{a}c^2$; and
\item $7$ dihedral f-tilings if the prototiles are the M\"obius triangle and the isosceles triangle $\triangle \bar{b}c^2$.
\end{enumerate}
\end{theorem*}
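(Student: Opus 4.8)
The plan is to recast the existence question as a finite constraint-satisfaction problem and then enumerate exhaustively. First I would fix the angle values $\alpha=\tfrac14\pi$, $\beta=\tfrac13\pi$, $\gamma=\tfrac12\pi$ and the three possible induced prototiles, and write down the complete list of admissible \emph{vertex types}: multisets of angles drawn from $\{\alpha,\beta,\gamma\}$ (coming from either prototile) whose total is $2\pi$ and which can be split into two alternating groups each summing to $\pi$, as required by \eqref{Eq-alt-ang-sums}. Because the angles are rational multiples of $\pi$ with small denominators, this list is short; for each case (kite, $\triangle\bar ac^2$, $\triangle\bar bc^2$) I would tabulate the finitely many vertex types together with the edge-length data ($a$, $b$, $c$, $\bar a=2a$, $\bar b=2b$) forced around each vertex, so that the edge-to-edge and edge-length-matching conditions become part of the combinatorial data.

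Next I would use Euler's formula for the sphere, $V-E+F=2$, together with the degree and angle-count bounds, to bound the number of tiles $F$ from above in each of the three cases; this turns the search space finite. The enumeration itself proceeds by the standard ``propagation from a seed tile'' technique used throughout the f-tiling literature (e.g.\ \cite{as,as2,as3,as4,as5}): start from one prototile, add neighbours tile-by-tile, at each step enforcing (i) edge-to-edge matching with correct edge labels, (ii) that every completed vertex is one of the admissible vertex types, and (iii) that no angle sum around a partial vertex already exceeds what any admissible type allows. Branch on the finitely many local choices, prune dead branches, and collect the completed spherical tilings; finally discard those that are in fact monohedral or that coincide under a spherical isometry. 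The paper advertises two independent executions of this search — one organised via the symmetry group $\Delta(2,3,4)$ and the spherical symmetry of the resulting tilings (Section~\ref{Sec:Sym}), one via graph isomorphism (Section~\ref{Sec:Graph}) — and agreement of the two counts is the cross-check that the count $104+12+7=123$ is complete and correct.

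The main obstacle is precisely the one the introduction flags: the kite is a quadrilateral with only two distinct edge lengths ($a$ and $b$), so the combinatorial branching is far less constrained than for a generic prototile, and the case analysis explodes — this is why case~(1) alone contributes $104$ tilings and needs the heavier machinery. Concretely, the hard part is controlling the proliferation of partial configurations: one must find enough structural lemmas (forced neighbourhoods of certain vertex types, incompatibility of certain pairs of adjacent vertices, parity or counting constraints on how many $\gamma$-corners meet) to keep the tree of cases manageable, rather than relying on brute force alone. Bringing in $\Delta(2,3,4)$ as an actual tool — realising each candidate tiling as a union of orbits of the Möbius triangle under a subgroup, so that symmetry cuts down the search rather than merely appearing a posteriori — is the key new idea, and verifying that this group-theoretic bookkeeping captures \emph{all} tilings (not just the symmetric ones) is the delicate point that the alternative graph-isomorphism proof is designed to corroborate. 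The isosceles-triangle cases (2) and (3) are comparatively tame, since a third edge length $\bar a$ or $\bar b=2b$ reappears and rigidifies the local combinatorics, so I expect those to fall out of a short direct enumeration.
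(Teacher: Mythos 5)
Your proposal identifies the problem as a finite constraint-satisfaction problem and gestures at the standard seed-and-propagate enumeration, but it omits the single idea that the paper's proof actually rests on, and without it the argument does not close. The paper observes that splitting every copy of the induced prototile back into two mirror-image M\"obius triangles turns any dihedral f-tiling into an edge-to-edge monohedral tiling by the M\"obius triangle $(2,3,4)$; by the classification of such monohedral tilings \cite{cly, ua} there are exactly two, $B\mathcal{O}$ and $FB\mathcal{O}$. Hence every dihedral f-tiling is obtained by deleting a set of $x$-edges (for a fixed $x=a,b,c$) from one of these two \emph{fixed} combinatorial structures, and Lemma \ref{Lem-even-deg-iff-alt-sum} converts the folding condition \eqref{Eq-alt-ang-sums} into the purely combinatorial requirement that every vertex keep even degree. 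This is what makes the search space not merely finite but small and, crucially, what makes completeness automatic: one is enumerating subsets of a known edge set, not growing partial tilings whose exhaustiveness must be argued separately. Your plan instead leaves the hard part open --- you write that one ``must find enough structural lemmas \ldots to keep the tree of cases manageable'' but do not supply them, and the Euler-formula bound on $F$ you invoke does not by itself tame the branching for the kite case.

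Two further points of divergence. First, you cast $\Delta(2,3,4)$ as a device for building candidate tilings as unions of orbits of the triangle under a subgroup, and you flag as ``delicate'' the worry that this might miss asymmetric tilings. In the paper the group plays a different and unproblematic role: $G=\Delta(2,3,4)$ (of order $48$) is the automorphism group of the deltoidal icositetrahedron realising $B\mathcal{O}$, and it acts on the set of \emph{all} edge assignments (symmetric or not); the count is the number of orbits, so no tiling can be missed. Second, the two proofs in Sections \ref{Sec:Sym} and \ref{Sec:Graph} are not independent global searches whose agreement certifies completeness --- both start from the same $B\mathcal{O}$/$FB\mathcal{O}$ reduction and differ only in how they test two edge assignments for equivalence (group action on position vectors versus graph isomorphism of adjacency matrices). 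The cross-check validates the orbit counting, not the reduction. As written, your proposal is a reasonable description of the older f-tiling methodology but does not constitute a proof of the stated counts $104+12+7=123$.
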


Under this unified framework, we efficiently recover the results in \cite{as2015,as6} in items 2  and 3.  We also take the opportunity to point out a small error in Table 1 in \cite{as6}. The symmetry group of the f-tiling $F_6$ is $C_2 \times C_2$ and $F_4$ and $F_6$ are isomorphic.  

The structure of each tiling is best represented in a plane drawing. The plane drawings of the tilings are given in the Appendix. The open-ended edges (with or without the arrows) in the plane drawing of each tiling converge to a single vertex.  

Since the second prototile is induced by the M\"obius triangle, reversing the gluing in the second prototile results in two identical copies of the M\"obius triangle and doing so in each copy of the second prototile in a dihedral tiling results in monohedral tiling by the M\"obius triangles. There are two such tilings \cite{cly, ua}, the barycentric subdivision $B\mathcal{O}$ of the octahedron $\mathcal{O}$ and its flip modification $FB\mathcal{O}$ (plane drawings in Figure \ref{Fig:barycentric-octahedron-flip}). Hence, every dihedral f-tiling can be reduced to $B\mathcal{O}$ or $FB\mathcal{O}$ via such subdivision.



\begin{figure}[h!]
\centering
\begin{tikzpicture}[>=latex]

\tikzmath{
\xs=4;
}

\begin{scope}

\begin{scope}[rotate=45]

\foreach \a in {0,...,3} {

\tikzset{rotate=90*\a}

\draw[red]
	(45:0.8) -- (0:1.1) -- (-45:0.8)
;

\draw[->, red]
	(0:1.1) -- (0:1.5)
;

\draw[red]
	(0,0) -- (0.45,0)
	(45:0.8) -- (0.45,0) -- (-45:0.8);

}

\foreach \a in {0,...,3} {

\tikzset{rotate=90*\a}

\draw[blue!80!black, thick]
	(0:0.8) -- (45:0.8) -- (90:0.8)
;

\draw[blue!80!black, thick]
	(0,0) -- (45:0.8);

\draw[->,blue!80!black, thick]
	(45:0.8) -- (45:1.5)
;

}

\foreach \a in {0,...,3} {

\tikzset{rotate=90*\a}

\draw[double, line width=0.5]
	(0:0.8) -- (0:1.1)
	(45:1.1) -- (0:1.1) -- (-45:1.1)
;

\draw[double, line width=0.5]
	(0:0.8) -- (0:0.45) -- (45:0.45) -- (90:0.45);

}

\end{scope}

\node at (0,-2) {\small $B\mathcal{O}$};

\end{scope} 
	
\begin{scope}[xshift=\xs cm]

\begin{scope}[rotate=45]

\foreach \a in {0,...,3} {
{
\tikzset{rotate=90*\a}

\draw[red]
	(45:0.8) -- (0:1.1) -- (-45:0.8)
;

\draw[->, red]
	(0:1.1) -- (0:1.5)
;

}

{
\tikzset{rotate=45+90*\a} 

\draw[red]
	(0,0) -- (0.45,0)
	(45:0.8) -- (0.45,0) -- (-45:0.8);

\draw[blue!80!black, thick]
	(0,0) -- (45:0.8);

\draw[double, line width=0.5]
	(0:0.8) -- (0:0.45) -- (45:0.45) -- (90:0.45);

}
}

\foreach \a in {0,...,3} {
{
\tikzset{rotate=90*\a}

\draw[blue!80!black, thick]
	(0:0.8) -- (45:0.8) -- (90:0.8);

\draw[->, blue!80!black, thick]
	(45:0.8) -- (45:1.5)
;

\draw[double, line width=0.5]
	(0:0.8) -- (0:1.1)
	(45:1.1) -- (0:1.1) -- (-45:1.1);
}

{
\tikzset{rotate=45+90*\a}

\draw[blue!80!black, thick]
	(0,0) -- (45:0.8);

\draw[double, line width=0.5]
	(0:0.8) -- (0:0.45) -- (45:0.45) -- (90:0.45);

}
}

\foreach \a in {0,...,3} {
{
\tikzset{rotate=90*\a}

\draw[double, line width=0.5]
	(0:0.8) -- (0:1.1)
	(45:1.1) -- (0:1.1) -- (-45:1.1);
}

{
\tikzset{rotate=45+90*\a} 

\draw[double, line width=0.5]
	(0:0.8) -- (0:0.45) -- (45:0.45) -- (90:0.45);

}
}

\end{scope} 

\node at (0,-2) {\small $FB\mathcal{O}$};

\end{scope}

\end{tikzpicture}
\caption{The plane drawings of the barycentric subdivision of the octahedron $B\mathcal{O}$ and its flip modification $FB\mathcal{O}$; the arrows in each picture converge to a single vertex}
\label{Fig:barycentric-octahedron-flip}
\end{figure}
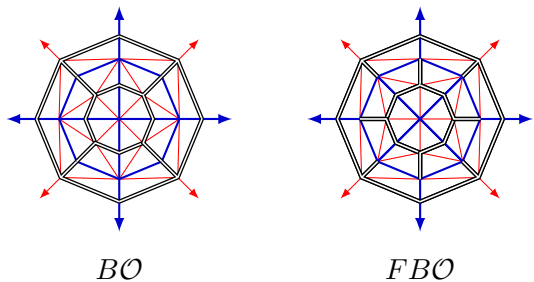


In view of the ``universality" of $B\mathcal{O}, FB\mathcal{O}$ in our classification, the dihedral f-tilings are determined by the presence (or the absence) of each $x$-edge in one of $B\mathcal{O}, FB\mathcal{O}$, subject to the folding conditions for a fixed $x=a, b$ or $c$. We call a result from this process an {\em $x$-edge assignment} (in $B\mathcal{O}$ or $FB\mathcal{O}$) or simply an {\em edge assignment}. For example, when $x=c$, the dashed lines in Figure \ref{Fig:edge-assign-BP8-FBP8} indicate the locations of the presence or absence of $c$-edges in $B\mathcal{O}$ and $FB\mathcal{O}$ respectively. 
In $B\mathcal{O}$, the labels, $T_i$ for $i \in I:=\{1,...,8\}$ and $S_j$ for $j \in J:=\{1,...,6\}$, represent the vertices corresponding to the cube (dual to the octahedron) and the octahedron, respectively.  In $FB\mathcal{O}$,  the vertices after a $\frac{1}{4}\pi$-rotation in the inner hemisphere of $B\mathcal{O}$ are denoted as $T_1', T_2', T_3', T_4'$ and $S_2', S_3', S_4', S_5'$. The above, which outlines the proof for the main theorem, is effectively a constraint satisfaction problem that can be resolved by computer. It is worth-noting that the same method works for dihedral or multihedral tilings without the folding condition.

\begin{figure}[h!]
\centering
\begin{tikzpicture}[>=latex]

\tikzmath{
\s=3;
\r=1.5; 
\x=0.4*\r;
\l=3;
\th=360/4;
}

\begin{scope}[] 

\begin{scope}

\foreach \a in {0,1,2,3} {

\tikzset{rotate=\a*\th}
	
\draw[double, line width=0.5]
	(\x,\x) -- (-\x,\x)
	(\l*\x,\l*\x) -- (-\l*\x,\l*\x)
	(\x,\x) -- (\l*\x,\l*\x) 
;

\draw[blue!80!black]
	(0,0) -- (2*\x,0)
;

\draw[->, blue!80!black]
	(2*\x,0) -- (4*\x,0)
;

}

\foreach \a in {0,...,3} {

\tikzset{rotate=\a*\th}

\draw[red, dashed]
	(0,0) -- (\x,\x)
	(2*\x,0) -- (\x,\x)
	(0,2*\x) -- (\x,\x)
	(0,2*\x) -- (\l*\x,\l*\x)
	(2*\x,0) -- (\l*\x,\l*\x)
	(4*\x,0) -- (\l*\x,\l*\x)
	(0,4*\x) -- (\l*\x,\l*\x)
;

}

\draw[blue!80!black] (0,0) circle (2*\x);

\node[inner sep=0.3,draw=blue!80!black,fill=white,shape=circle] at (0,0) {\tiny \textcolor{blue!80!black}{$S_1$}};
\node[inner sep=0.3,draw=blue!80!black,fill=white,shape=circle] at (2*\x,0) {\tiny \textcolor{blue!80!black}{$S_2$}};
\node[inner sep=0.3,draw=blue!80!black,fill=white,shape=circle] at (0,2*\x) {\tiny \textcolor{blue!80!black}{$S_3$}};
\node[inner sep=0.3,draw=blue!80!black,fill=white,shape=circle] at (-2*\x,0) {\tiny \textcolor{blue!80!black}{$S_4$}};
\node[inner sep=0.3,draw=blue!80!black,fill=white,shape=circle] at (0,-2*\x) {\tiny \textcolor{blue!80!black}{$S_5$}};

\node[inner sep=0.3,draw=blue!80!black,fill=white,shape=circle] at (4*\x,0) {\tiny \textcolor{blue!80!black}{$S_6$}};
\node[inner sep=0.3,draw=blue!80!black,fill=white,shape=circle] at (-4*\x,0) {\tiny \textcolor{blue!80!black}{$S_6$}};
\node[inner sep=0.3,draw=blue!80!black,fill=white,shape=circle] at (0,4*\x) {\tiny \textcolor{blue!80!black}{$S_6$}};
\node[inner sep=0.3,draw=blue!80!black,fill=white,shape=circle] at (0,-4*\x) {\tiny \textcolor{blue!80!black}{$S_6$}};

\node[inner sep=0.3,draw=black,fill=white,shape=circle] at (\x,\x) {\tiny \textcolor{red}{$T_1$}};
\node[inner sep=0.3,draw=black,fill=white,shape=circle] at (-\x,\x) {\tiny \textcolor{red}{$T_2$}};
\node[inner sep=0.3,draw=black,fill=white,shape=circle] at (-\x,-\x) {\tiny \textcolor{red}{$T_3$}};
\node[inner sep=0.3,draw=black,fill=white,shape=circle] at (\x,-\x) {\tiny \textcolor{red}{$T_4$}};

\node[inner sep=0.3,draw=black,fill=white,shape=circle] at (\l*\x,\l*\x) {\tiny \textcolor{red}{$T_5$}};
\node[inner sep=0.3,draw=black,fill=white,shape=circle] at (-\l*\x,\l*\x) {\tiny \textcolor{red}{$T_6$}};
\node[inner sep=0.3,draw=black,fill=white,shape=circle] at (-\l*\x,-\l*\x) {\tiny \textcolor{red}{$T_7$}};
\node[inner sep=0.3,draw=black,fill=white,shape=circle] at (\l*\x,-\l*\x) {\tiny \textcolor{red}{$T_8$}};

\end{scope} 

\begin{scope}[xshift=2*\s cm] 

\foreach \a in {0,1,2,3} {

\tikzset{rotate=\a*\th}
	
\draw[double, line width=0.5]
	(90:\x) -- (90+\th:\x)
	(\x,0) -- (2*\x,0)
	(\l*\x,\l*\x) -- (-\l*\x,\l*\x)
	(45:2*\x) -- (\l*\x,\l*\x) 
;

\draw[blue!80!black]
	(0,0) -- (45:2*\x) 
;

\draw[->, blue!80!black]
	(2*\x,0) -- (4*\x,0)
;

}

\foreach \a in {0,...,3} {

\tikzset{rotate=\a*\th}

\draw[red, densely dashed]
	(0,0) -- (\x,0)
;

\draw[red, dashed]
	(\x,0) -- (45:2*\x)
	(0,\x) -- (45:2*\x)
	(0,2*\x) -- (\l*\x,\l*\x)
	(2*\x,0) -- (\l*\x,\l*\x)
	(4*\x,0) -- (\l*\x,\l*\x)
	(0,4*\x) -- (\l*\x,\l*\x)
;

}

\draw[blue!80!black] (0,0) circle (2*\x);

\node[inner sep=0.3,draw=blue!80!black,fill=white,shape=circle] at (0,0) {\tiny \textcolor{blue!80!black}{$S_1$}};

\node[inner sep=0.3,draw=blue!80!black,fill=white,shape=circle] at (2*\x,0) {\tiny \textcolor{blue!80!black}{$S_2$}};
\node[inner sep=0.3,draw=blue!80!black,fill=white,shape=circle] at (0,2*\x) {\tiny \textcolor{blue!80!black}{$S_3$}};
\node[inner sep=0.3,draw=blue!80!black,fill=white,shape=circle] at (-2*\x,0) {\tiny \textcolor{blue!80!black}{$S_4$}};
\node[inner sep=0.3,draw=blue!80!black,fill=white,shape=circle] at (0,-2*\x) {\tiny \textcolor{blue!80!black}{$S_5$}};

\node[inner sep=0.05,draw=blue!80!black,fill=white,shape=circle] at (45:2*\x) {\tiny \textcolor{blue!80!black}{$S_3'$}};
\node[inner sep=0.05,draw=blue!80!black,fill=white,shape=circle] at (135:2*\x) {\tiny \textcolor{blue!80!black}{$S_4'$}};
\node[inner sep=0.05,draw=blue!80!black,fill=white,shape=circle] at (225:2*\x) {\tiny \textcolor{blue!80!black}{$S_5'$}};
\node[inner sep=0.05,draw=blue!80!black,fill=white,shape=circle] at (315:2*\x) {\tiny \textcolor{blue!80!black}{$S_2'$}};

\node[inner sep=0.3,draw=blue!80!black,fill=white,shape=circle] at (4*\x,0) {\tiny \textcolor{blue!80!black}{$S_6$}};
\node[inner sep=0.3,draw=blue!80!black,fill=white,shape=circle] at (-4*\x,0) {\tiny \textcolor{blue!80!black}{$S_6$}};
\node[inner sep=0.3,draw=blue!80!black,fill=white,shape=circle] at (0,4*\x) {\tiny \textcolor{blue!80!black}{$S_6$}};
\node[inner sep=0.3,draw=blue!80!black,fill=white,shape=circle] at (0,-4*\x) {\tiny \textcolor{blue!80!black}{$S_6$}};

\node[inner sep=0.05,draw=black,fill=white,shape=circle] at (\x,0) {\tiny \textcolor{red}{$T_1'$}};
\node[inner sep=0.05,draw=black,fill=white,shape=circle] at (0,\x) {\tiny \textcolor{red}{$T_2'$}};
\node[inner sep=0.05,draw=black,fill=white,shape=circle] at (-\x,0) {\tiny \textcolor{red}{$T_3'$}};
\node[inner sep=0.05,draw=black,fill=white,shape=circle] at (0,-\x) {\tiny \textcolor{red}{$T_4'$}};


\node[inner sep=0.3,draw=black,fill=white,shape=circle] at (\l*\x,\l*\x) {\tiny \textcolor{red}{$T_5$}};
\node[inner sep=0.3,draw=black,fill=white,shape=circle] at (-\l*\x,\l*\x) {\tiny \textcolor{red}{$T_6$}};
\node[inner sep=0.3,draw=black,fill=white,shape=circle] at (-\l*\x,-\l*\x) {\tiny \textcolor{red}{$T_7$}};
\node[inner sep=0.3,draw=black,fill=white,shape=circle] at (\l*\x,-\l*\x) {\tiny \textcolor{red}{$T_8$}};


\end{scope} 

\end{scope} 

\end{tikzpicture}
\caption{Locations of $c$-assignments in $B\mathcal{O}$ and $FB\mathcal{O}$}
\label{Fig:edge-assign-BP8-FBP8}
\end{figure}





We establish two useful facts for the proof below.

\begin{lem}\label{Lem-suff-alt-ang-sum} If a vertex has a partition into adjacent angle pairs and the adjacent angles share the same value in each pair, then the vertex satisfies the sums of alternate angles \eqref{Eq-alt-ang-sums}.
\end{lem}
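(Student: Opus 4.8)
The plan is to unwind the definitions and reduce the claim to the elementary fact that the angles around any vertex of a spherical tiling sum to $2\pi$. Let the vertex have degree $2k$ (note that the very existence of a partition into pairs already forces the degree to be even) and list its angles $\alpha_1,\dots,\alpha_{2k}$ in cyclic order. A partition into adjacent angle pairs is a decomposition of this cyclic sequence into $k$ blocks of two consecutive terms; there are only two such partitions and they differ by a cyclic shift of the indices, so I would first relabel the indices so that the hypothesis reads $\alpha_{2i-1}=\alpha_{2i}$ for every $i=1,\dots,k$.

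Next I would sum these hypothesised equalities over $i$ to obtain $\sum_{i=1}^{k}\alpha_{2i-1}=\sum_{i=1}^{k}\alpha_{2i}$, and denote this common value by $\Sigma$. Since the set of odd-indexed angles and the set of even-indexed angles together exhaust all $2k$ angles incident to the vertex, we get $2\Sigma=\sum_{j=1}^{2k}\alpha_j=2\pi$, the last equality holding because the vertex is an interior vertex of a tiling of the sphere and the tiles fill the full angle $2\pi$ around it. Hence $\Sigma=\pi$, which is precisely the condition \eqref{Eq-alt-ang-sums}.

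There is essentially no obstacle here; the lemma is a bookkeeping observation. The only point that warrants a sentence of care is matching the two ``alternate'' families of angles appearing in \eqref{Eq-alt-ang-sums} with, respectively, the set of first entries and the set of second entries of the chosen pairs after the relabelling, so that the term-by-term equality within pairs translates directly into equality of the two alternating sums. I would also remark that the statement is used in the sequel purely as a convenient sufficient condition for the folding condition at a vertex, so no converse is claimed or needed.
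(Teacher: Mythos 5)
Your proof is correct and follows essentially the same route as the paper's: equality within each adjacent pair forces the two alternating sums to coincide, and since all angles at the vertex total $2\pi$, each sum is $\pi$. The only difference is that you spell out the final $2\Sigma = 2\pi$ step, which the paper leaves implicit in ``which implies the assertion.''
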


\begin{proof} A partition in the hypothesis has angle pairs $(\alpha_1, \alpha_2), ..., (\alpha_{2k-1}, \alpha_{2k})$, where $\alpha_1 = \alpha_2 = \lambda_1$, and $\alpha_3=\alpha_4=\lambda_2$, ..., and $\alpha_{2k-1} = \alpha_{2k}=\lambda_k$. Then $\sum_{i=1}^k \alpha_{2i-1} = \sum_{i=1}^k \alpha_{2i} = \sum_{i=1}^{k} \lambda_i$, which implies the assertion.
\end{proof}

\begin{lem}\label{Lem-even-deg-iff-alt-sum} In a dihedral tiling induced by the M\"obius triangle $(2,3,4)$, a vertex has an even degree if and only if the sums of alternate angles are $\pi$.
\end{lem}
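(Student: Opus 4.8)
\section*{Proof proposal}

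The plan is to treat the two implications separately. If the sums of alternate angles at a vertex are $\pi$, its degree is even, as remarked after \eqref{Eq-alt-ang-sums}; so the content is the converse, which I would argue as follows. Let $v$ be a vertex of even degree. Reading off the prototiles from Figure~\ref{Fig:prototiles-kite-Mobius-tri}, every corner at $v$ has angle in $\{\tfrac14\pi,\tfrac13\pi,\tfrac12\pi,\tfrac23\pi\}$: the M\"obius triangle supplies $\alpha,\beta,\gamma$; the kite supplies $\alpha^2=\tfrac12\pi$, $\beta^2=\tfrac23\pi$ and two copies of $\gamma$; and the isosceles triangles supply $\alpha^2,\beta,\beta$ and $\beta^2,\alpha,\alpha$. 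Write $p,q,r,s$ for the numbers of corners at $v$ of angle $\tfrac14\pi,\tfrac13\pi,\tfrac12\pi,\tfrac23\pi$. The angle-sum equation $3p+4q+6r+8s=24$ forces $p$ to be even, and since every corner has angle at least $\tfrac14\pi$ the degree lies in $\{4,6,8\}$.

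Now I would bring in the edge labels. In an edge-to-edge tiling each edge at $v$ is flanked by two corners whose adjacent sides both carry that edge's label, so for any fixed label the sides at $v$ bearing it split into pairs, one per such edge, and in particular their number is even. Apply this to the label $c$: by Figure~\ref{Fig:prototiles-kite-Mobius-tri}, a corner of angle $\tfrac14\pi$ or $\tfrac13\pi$ is flanked by exactly one $c$-edge (it is a corner of the M\"obius triangle or a base corner of $\triangle\bar{a}c^2$ or $\triangle\bar{b}c^2$), whereas a corner of angle $\tfrac12\pi$ or $\tfrac23\pi$ is flanked by either zero $c$-edges (a corner of the M\"obius triangle or the kite) or two $c$-edges (an apex of $\triangle\bar{a}c^2$ or $\triangle\bar{b}c^2$). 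Hence the number of $c$-sides at $v$ is congruent to $p+q$ modulo $2$, so $p+q$, and therefore $q$, is even. Since the degree $p+q+r+s$ is even, $r+s$ is even too, and the solutions of $3p+4q+6r+8s=24$ with $p$, $q$, $r+s$ all even are exactly
\[
(p,q,r,s)\in\{(8,0,0,0),\ (0,6,0,0),\ (0,0,4,0),\ (4,0,2,0),\ (0,2,0,2)\}.
\]

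It remains to check \eqref{Eq-alt-ang-sums} for each of these five vertex types. In the three homogeneous cases all corners share a common value $\lambda$ with $d\lambda=2\pi$, so every alternating sum equals $\tfrac{d}{2}\lambda=\pi$. For $(0,2,0,2)$, the edge incidences of Figure~\ref{Fig:prototiles-kite-Mobius-tri} show that a $\tfrac23\pi$-corner --- necessarily a $\beta^2$ of the kite or an apex $\beta^2$ of $\triangle\bar{b}c^2$ --- cannot be flanked by $\tfrac13\pi$-corners on both sides, which forces the two $\tfrac23\pi$-corners to be mutually adjacent and likewise the two $\tfrac13\pi$-corners, so Lemma~\ref{Lem-suff-alt-ang-sum} applies. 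For $(4,0,2,0)$, a short case analysis of the cyclic positions of the two $\tfrac12\pi$-corners --- again reading off which edges flank each corner type, in each of the three prototile families --- rules out their being at cyclic distance $2$; this leaves the case where they are adjacent, covered by Lemma~\ref{Lem-suff-alt-ang-sum}, and the case where they are antipodal, where $\tfrac12\pi+\tfrac14\pi+\tfrac14\pi=\pi$ verifies \eqref{Eq-alt-ang-sums} directly.

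The delicate part is the last step: for each surviving vertex type and each of the three prototile families one must track carefully which cyclic arrangements the edge labels actually realise --- keeping $\bar{a}=2a$ and $\bar{b}=2b$ distinct from $a$ and $b$ throughout --- and confirm that every realisable arrangement meets \eqref{Eq-alt-ang-sums}. The $c$-parity observation above is precisely the device that reduces this to the short finite check just described.
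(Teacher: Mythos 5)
Your argument is correct, but it takes a genuinely different route from the paper's. The paper proves the forward implication globally: every induced dihedral f-tiling is obtained from one of the monohedral tilings $B\mathcal{O}$, $FB\mathcal{O}$ by deleting edges, so each vertex is $\alpha^8$, $\beta^6$ or (in $FB\mathcal{O}$) $\alpha^4\gamma^2$ with an even number of edges removed, and Lemma~\ref{Lem-suff-alt-ang-sum} then applies; this presupposes the ``universality'' of $B\mathcal{O}$ and $FB\mathcal{O}$, i.e.\ the classification of monohedral tilings by the M\"obius triangle. You instead argue purely locally at an arbitrary even-degree vertex, via the equation $3p+4q+6r+8s=24$, the evenness of the number of $c$-sides (giving $2\mid p+q$ and hence $2\mid q$), and an edge-matching check of the five surviving vertex types. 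This is more self-contained --- it would survive even without the $B\mathcal{O}/FB\mathcal{O}$ reduction --- at the cost of the finite case analysis you only sketch. I checked that both sketched exclusions do go through in each of the three prototile families, using that $a$, $b$, $c$, $\bar a$, $\bar b$ have pairwise distinct lengths so that matched sides must carry equal labels (a fact you rely on but do not verify). One small imprecision: in the $(0,2,0,2)$ case the contradiction for the alternating arrangement is not visible at the $\tfrac{2}{3}\pi$-corner you examine but at the opposite $\tfrac{2}{3}\pi$-corner, which would have to receive two $c$-sides (kite family) or two $a$-sides ($\triangle\bar b c^2$ family) that it does not possess; the conclusion that the two $\tfrac{2}{3}\pi$-corners must be adjacent is nevertheless right, and the $(4,0,2,0)$ exclusion checks out similarly.
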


\begin{proof} The backward implication is obvious. It suffices to consider a vertex of even degree. The presence of a copy of an induced prototile is equivalent to merging two adjacent M\"obius triangles, i.e., removing their common edge, depending on $a,b$ or $c$. All the induced dihedral tilings come from one of $B\mathcal{O}, FB\mathcal{O}$. Both have all vertices of even degrees.

For $B\mathcal{O}$, the vertices are either $\alpha^8$ (Figure \ref{Fig:S-even-edges}, first picture) partitioned by alternating $b,c$ or $\beta^6$ partitioned by alternating $a,c$. By Lemma \ref{Lem-suff-alt-ang-sum}, the vertices $\alpha^8$ satisfy \eqref{Eq-alt-ang-sums} and the same is still true after removing even number of $c$-edges (Figure \ref{Fig:S-even-edges}, first four pictures). The same argument prevails for the case of $\triangle \bar{a}c^2$ where the $b$-edges are removed. The arguments for $\triangle \bar{b}c^2$ and $\beta^6$ are analogous.


\begin{figure}[h!] 
\centering
\begin{tikzpicture}

\tikzmath{
\s=2.25;
\r=1.2;
\rr=0.05*\r;
\th=360/4;
\x=\r*cos(0.5*\th);
}

\foreach \xs in {0,1,2,3} {

\tikzset{xshift=\xs*\s cm}

\foreach \a in {0,1,2,3} {

\tikzset{rotate=\a*\th}

\draw[blue!80!black, thick]
	(0,0) -- (\x,0)
;

	(\x, \x) -- (-\x, \x)
;

}
}

\begin{scope}[xshift=0*\s cm]

\foreach \a in {0,...,3} {

\tikzset{rotate=\a*\th}

\draw[red]
	(0,0) -- (\x,\x)
;

\node at (0.45*\x, 0.2*\x) {\small $\alpha$};
\node at (0.2*\x, 0.45*\x) {\small $\alpha$};

}



\end{scope}

\begin{scope}[xshift=1*\s cm]

\foreach \aa in {-1,1} {

\tikzset{xscale=\aa}

\draw[red]
	(0,0) -- (\x,\x)
;

\node at (0.45*\x, 0.2*\x) {\small $\alpha$};
\node at (0.2*\x, 0.45*\x) {\small $\alpha$};
\node at (0.3*\x, -0.3*\x) {\small $\alpha^2$};

}



\end{scope}

\begin{scope}[xshift=2*\s cm]

\foreach \a in {0,1} {

\tikzset{rotate=\a*180}

\draw[red]
	(0,0) -- (\x,\x)
;

\node at (0.45*\x, 0.2*\x) {\small $\alpha$};
\node at (0.2*\x, 0.45*\x) {\small $\alpha$};
\node at (0.3*\x, -0.3*\x) {\small $\alpha^2$};

}



\end{scope}

\begin{scope}[xshift=3*\s cm]

\foreach \aa in {-1,1} {

\tikzset{xscale=\aa}

\node at (0.3*\x, 0.3*\x) {\small $\alpha^2$};
\node at (0.3*\x, -0.3*\x) {\small $\alpha^2$};

}



\end{scope}

\begin{scope}[xshift=4*\s cm]

\foreach \a in {0,2,3} {

\tikzset{rotate=\a*\th}

\draw[blue!80!black, thick]
	(0,0) -- (\x,0)
;

}

\draw[double, line width=0.5]
	 (0, 0) -- (0, \x)
;

\draw[red]
	(0,0) -- (\x,-\x)
	(0,0) -- (-\x,-\x)
;

\node at (-0.3*\x, 0.3*\x) {\small $\gamma$};
\node at (0.3*\x, 0.3*\x) {\small $\gamma$};
\node at (-0.45*\x, -0.2*\x) {\small $\alpha$};
\node at (-0.2*\x, -0.45*\x) {\small $\alpha$};
\node at (0.45*\x, -0.2*\x) {\small $\alpha$};
\node at (0.2*\x, -0.45*\x) {\small $\alpha$};

\end{scope}

\begin{scope}[xshift=5*\s cm]

\foreach \a in {0,2,3} {

\tikzset{rotate=\a*\th}

\draw[blue!80!black, thick]
	(0,0) -- (\x,0)
;

}

\draw[double, line width=0.5]
	 (0, 0) -- (0, \x)
;

\node at (-0.3*\x, 0.3*\x) {\small $\gamma$};
\node at (0.3*\x, 0.3*\x) {\small $\gamma$};
\node at (-0.3*\x, -0.3*\x) {\small $\alpha^2$};
\node at (0.3*\x, -0.3*\x) {\small $\alpha^2$};

\end{scope}

\end{tikzpicture}
\caption{The even $c$-assignments at a vertex}
\label{Fig:S-even-edges}
\end{figure}
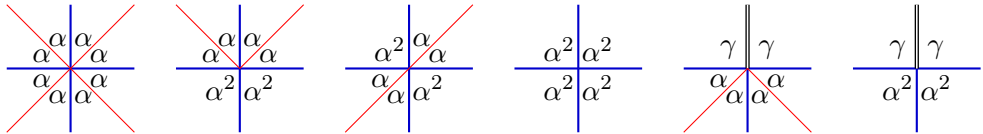

For $FB\mathcal{O}$, the only exception is the vertex $\alpha^4\gamma^2$ (Figure \ref{Fig:S-even-edges}, fifth picture). The removal of even number of $c$-edges will result in the sixth picture and obviously the sums of alternating angles are $\pi$. The removal of even number (two) of $b$-edges will result in a quadrilateral with edge configuration $acac$ (Figure \ref{Fig:barycentric-octahedron-flip}), a contradiction. Hence no $b$-edges are to be removed at $\alpha^4\gamma^2$.
\end{proof}

\section{The Symmetry Approach}
\label{Sec:Sym}

As discussed in the paragraph after the main theorem, the dihedral f-tilings induced by the M\"obius triangle are obtained by $x$-edge assignments in $B\mathcal{O}$ and $FB\mathcal{O}$ for fixed $x=a,b$ or $c$. It suffices to determine which assignments are unique up to isomorphism. To achieve it, we use the geometric models having equivalent locations of the vertices in $\mathbb{R}^3$ for the edge assignments and apply group actions by their corresponding automorphism group to check for isomorphism between two edge assignments. The models for $B\mathcal{O}$ and $FB\mathcal{O}$ are the deltoidal icositetrahedron and the pseudo-deltoidal icositetrahedron respectively. The spherical deltoidal icositetrahedron denoted by $o\,\mathcal{C}$ (Conway's notation) is illustrated in the first picture of Figure \ref{Fig:E2E-NE2E-3D} whereas the spherical pseudo-deltoidal icositetrahedron denoted by $Fo\,\mathcal{C}$, is obtained from the deltoidal icositetrahedron by twisting a half (upper or lower hemisphere) along the equator, the same modification as applied to $FB\mathcal{O}$. The automorphism group for $o\,\mathcal{C}$ is the triangle group $G=\Delta (2,3,4)$, which is also the octahedral symmetry, whereas the automorphism group $G'$ 
for $Fo\,\mathcal{C}$ is a simple consequence of the Orbit-Stabiliser Theorem and the classification of finite subgroups of $SO(3)$. Using this strategy, we prove the next two propositions.


\begin{figure}[h!]
\centering
\begin{subfigure}{0.35\textwidth} \centering
\includegraphics[height=3cm]{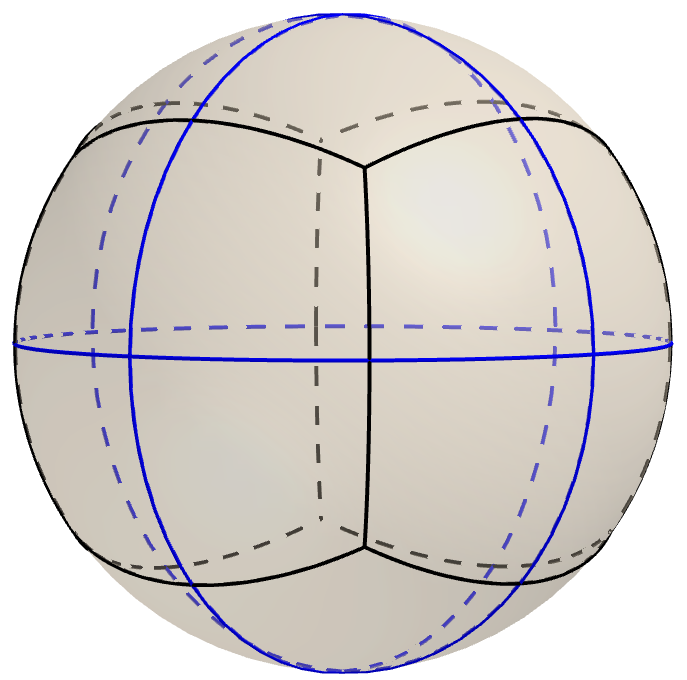}
\end{subfigure} 
\begin{subfigure}{0.35\textwidth} \centering
\includegraphics[height=3cm]{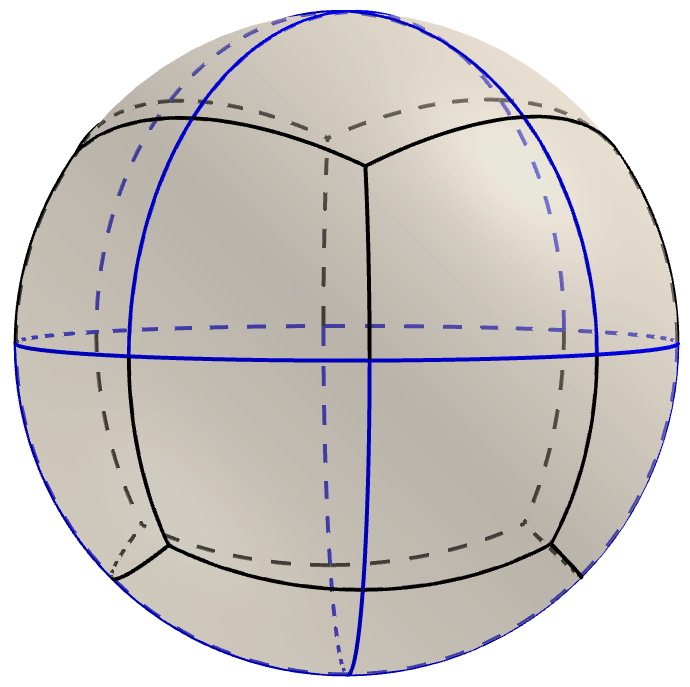}
\end{subfigure} 
\caption{The spherical deltoidal icositetrahedron and the spherical pseudo-deltoidal icositetrahedron}
\label{Fig:E2E-NE2E-3D}
\end{figure}

\begin{prop}\label{Prop-BP8} Up to isomorphism under $G$, in each of the edge assignments below, with the exception of one corresponding to $B\mathcal{O}$, there are
\begin{enumerate}
\item $75$ $c$-edge assignments corresponding to dihedral f-tilings;
\item $12$ $b$-edge assignments corresponding to dihedral f-tilings;
\item $5$ $a$-edge assignments corresponding to dihedral f-tilings.
\end{enumerate}
\end{prop}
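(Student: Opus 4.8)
The plan is to turn the enumeration of edge assignments into a finite orbit-counting problem under the action of $G = \Delta(2,3,4)$ acting on the spherical deltoidal icositetrahedron $o\,\mathcal{C}$. First I would fix, once and for all, a concrete coordinate model: place the $T$-vertices (the cube vertices) and $S$-vertices (the octahedron vertices) of $o\,\mathcal{C}$ at their standard positions in $\mathbb{R}^3$, so that $G$ acts on the set of $24$ faces, on the set of $48$ kite-half-edges, and in particular on the set of potential $x$-edges for each fixed $x = a, b, c$. An $x$-edge assignment is then a subset of this edge set, and the admissible assignments are those satisfying the folding conditions. By Lemma \ref{Lem-even-deg-iff-alt-sum}, for a fixed $x$ the folding condition at each vertex reduces to a purely local parity/combinatorial condition on how many $x$-edges are deleted there (together with the prohibition of the $acac$ quadrilateral in the $b$-case, which does not arise here since we are in $B\mathcal{O}$, not $FB\mathcal{O}$), so the set $\mathcal{A}_x$ of admissible $x$-assignments is explicitly describable.

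Next I would make precise the identification "edge assignment $\leftrightarrow$ dihedral f-tiling induced by the M\"obius triangle." Deleting the chosen $x$-edges from $B\mathcal{O}$ merges pairs of M\"obius triangles into copies of the corresponding induced prototile; conversely every such tiling reduces to $B\mathcal{O}$ by the subdivision discussed after the main theorem. Two assignments give isomorphic tilings precisely when they lie in the same $G$-orbit on $\mathcal{A}_x$ — here I would note that an abstract isomorphism of the tilings lifts to an isomorphism of the underlying $B\mathcal{O}$, hence to an element of $\mathrm{Aut}(B\mathcal{O}) = G$, using that $B\mathcal{O}$ has no nontrivial automorphisms fixing every kite-half (the edge labels $a,b,c$ and the triangle angles $\alpha<\beta<\gamma$ rigidify the combinatorics). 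The empty assignment corresponds to $B\mathcal{O}$ itself (no induced prototile appears), which is the declared exception; every nonempty orbit in $\mathcal{A}_x$ yields a genuine dihedral f-tiling. So the claim becomes: $\mathcal{A}_c/G$ has $76$ elements, $\mathcal{A}_b/G$ has $13$, $\mathcal{A}_a/G$ has $6$, in each case one of which is the empty assignment.

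Then I would carry out the orbit count. The $a$- and $b$-cases are small enough to enumerate essentially by hand or with a short symmetry argument: for $x=a$ or $x=b$ there are few $x$-edges in $B\mathcal{O}$ and the parity constraints cut the admissible subsets down to a handful of $G$-orbits, which I would list explicitly (this is how items 2 and 3 recover \cite{as2015,as6}). The $c$-case is the substantial one: there are $24$ candidate $c$-edges, the per-vertex condition is "an even number of incident $c$-edges is deleted at each $S_j$ and each $T_i$" (cf. Figure \ref{Fig:S-even-edges}), which defines $\mathcal{A}_c$ as (a coset-like subset of) a binary linear code, and then $|\mathcal{A}_c/G|$ is computed by Burnside's lemma: for each of the $48$ elements $g \in G$, count the admissible assignments fixed by $g$ (i.e. the admissible assignments constant on each $\langle g\rangle$-orbit of $c$-edges), and average. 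I would organize this by conjugacy class of $G$ — identity, the $6+8+6$ rotations of the octahedral group and the $24$ orientation-reversing elements — computing the fixed-point count for one representative per class.

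The main obstacle I expect is precisely this Burnside computation for $x=c$: getting the edge-set incidence data and the $\langle g\rangle$-orbit structure exactly right for every conjugacy class, and correctly imposing the folding (even-deletion) constraints on the fixed assignments, is bookkeeping-heavy and error-prone. This is exactly the "constraint satisfaction problem that can be resolved by computer" flagged in the text, and I would verify the hand computation against a direct computer enumeration of $\mathcal{A}_c$ and its $G$-orbits. A secondary subtlety is making airtight the claim that tiling-isomorphisms are realized by $G$ (so that the orbit count is neither an over- nor under-count); I would handle this by the rigidity remark above, namely that the labeled combinatorial type of $B\mathcal{O}$ has automorphism group exactly $G$ and that the $x$-edge deletions are $G$-equivariant, so two tilings are isomorphic iff their assignments are $G$-equivalent.
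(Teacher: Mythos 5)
Your setup (the coordinate model of $o\,\mathcal{C}$, the action of $G=\Delta(2,3,4)$ of order $48$, reduction of the folding condition to local parity constraints via Lemma \ref{Lem-even-deg-iff-alt-sum}, and identification of isomorphism classes of tilings with $G$-orbits of admissible assignments) matches the paper's. Where you diverge is the counting method: the paper enumerates orbit representatives directly in two stages (first the $22$ ``degree assignments'' $D=[d(T_1),\dots,d(T_8)]$ recording whether each octant keeps $3$ or $1$ of its $c$-edges, then the edge assignments refining each), whereas you propose Burnside's lemma over the conjugacy classes of $G$. Burnside is legitimate for the count asserted in the Proposition, though it does not produce the explicit representatives that the paper needs for the Appendix and the symmetry-group data; and since you defer the actual fixed-point computation to a computer check, your plan is at roughly the same level of completion as the paper's own terse enumeration.

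There is, however, one concrete error. You assert that the only degenerate orbit is the empty assignment ($B\mathcal{O}$), so that ``every nonempty orbit in $\mathcal{A}_x$ yields a genuine dihedral f-tiling,'' and accordingly you target $|\mathcal{A}_b/G|=13$. This fails for $x=b$: the assignment removing \emph{all} $24$ $b$-edges is admissible (an even number, namely $4$, is removed at each $S_j$ and $2$ at each edge midpoint), and it merges every pair of M\"obius triangles into a copy of $\triangle\bar{a}c^2$, producing the \emph{monohedral} triangular subdivision of the cube rather than a dihedral tiling. The correct orbit count is therefore $14$, with \emph{two} orbits to be discarded, as the paper records in Proposition \ref{Prop-Graph-b}; your reduction would report $13$ dihedral f-tilings instead of $12$. (No analogous problem occurs for $x=a$ or $x=c$, since each $T_i$ carries $3$ edges of that type and only an even number may be removed, so the all-removed assignment is inadmissible there.) A small secondary inaccuracy: there are not ``few'' $a$- or $b$-edges in $B\mathcal{O}$ — there are $24$ of each type, the same as for $c$; what shrinks those cases is the dimension of the admissible (cycle-space) set, not the number of candidate edges.
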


These edge assignments can be seen in Figures \ref{Fig:BP8-c-I}, 
\ref{Fig:BP8-b},  \ref{Fig:BP8-a} in the Appendix. The symmetry group of each tiling is also given.

\begin{proof} We will prove the most complicated case, the $c$-edge assignments, and the other cases are analogous and we leave them to the readers as an exercise. 

The monohedral tiling $B\mathcal{O}$ consists of eight barycentrically subdivided octants $X$ in the first picture of Figure \ref{Fig:x-y-octants}. The $c$-edge assignments will result in octants in form of $X$ or $Y$ (Figure \ref{Fig:x-y-octants}, second picture). This means that each resulting octant has either $3$ $c$-edges or $1$ $c$-edge. To reduce the redundancy, we will first determine the assignments (up to isomorphism) of the $c$-degree of $3$ or $1$ in each octant, and we call the procedure {\em $c$-degree assignment}. Redundancy will be ruled out via the automorphism group $G$.

\begin{figure}[h!] 
\centering
\begin{tikzpicture}

\tikzmath{
\s=3;
\r=1;
\th=360/3;
\x=\r*cos(\th/2);
}

\foreach \xs in {0,1} {

\tikzset{xshift=\xs*\s cm}

\foreach \a in {0,1,2} {

\tikzset{rotate=\a*\th}

\draw[blue!80!black, thick]
	(90:\r) -- (90+\th:\r)
;

\draw[double, line width=0.5]
	(0,0) -- (90-0.5*\th:\x)
;

}

}

\begin{scope}[] 

\foreach \a in {0,1,2} {

\tikzset{rotate=\a*\th}

\draw[red]
	(0,0) -- (90:\r)
;

}

\node at (0,-0.8*\r) {\small $X$};

\end{scope} 

\begin{scope}[xshift=\s cm] 

\draw[red]
	(0,0) -- (90:\r)
;

\node at (0,-0.8*\r) {\small $Y$};

\end{scope}

\end{tikzpicture}
\caption{The $X, Y$ octants of the sphere}
\label{Fig:x-y-octants}
\end{figure}
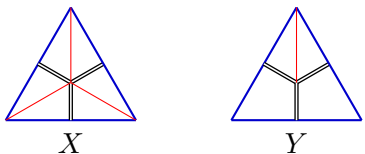

To formalise the $c$-degree assignment, we make use of the notations in the first picture of Figure \ref{Fig:edge-assign-BP8-FBP8}. Let $d: \{ T_i: i=1,...,8 \} \to \{ 1, 3 \}$ be a function where $\{ T_i: i=1,...,8 \}$ is the set of the centres of the octants and $\{ 1, 3 \}$ is obviously the intended set of degrees. We can represent $d$ as an ordered $8$-tuple $D=[d(T_1), d(T_2), ..., d(T_8)]$. 

The automorphism group $G$ has an order $48$. Two edge (resp. degree) assignments are called {\em isomorphic} if there is a group action $\sigma \in G$ mapping one assignment into the other. We will elaborate in details as follows. 

The representation of each $\sigma \in G$ is given by one of the following matrices with the assignments of $+$ or $-$ to the entries with $1$'s,
\begin{align*}
&\begin{bmatrix}
\pm 1 & 0 & 0 \\
0 & \pm 1 & 0 \\
0 & 0 & \pm 1
\end{bmatrix}, \quad
\begin{bmatrix}
0 & 0 & \pm 1 \\
0 & \pm 1 & 0 \\
\pm 1 & 0 & 0
\end{bmatrix}, \quad
\begin{bmatrix}
0 & \pm 1 & 0 \\
\pm 1 & 0 & 0 \\
0 & 0 & \pm 1
\end{bmatrix}, \\
&\begin{bmatrix}
0 & \pm 1 & 0 \\
0 & 0 & \pm 1 \\
\pm 1 & 0 & 0
\end{bmatrix}, \quad
\begin{bmatrix}
0 & 0 & \pm 1 \\
\pm 1 & 0 & 0 \\
0 & \pm 1 & 0
\end{bmatrix}, \quad
\begin{bmatrix}
\pm 1 & 0 & 0 \\
0 & 0 & \pm 1 \\
0 & \pm 1 & 0
\end{bmatrix}.
\end{align*}

For example, element {$-\sigma_7 \in G$} from \eqref{Eq-Oh} has the matrix representation 
\begin{align} \label{Eq-sigma-eg}
{-\sigma_7} = \begin{bmatrix}[r]
-1 & 0 & 0 \\
0 & 0 & ~\,\,1 \\
0 & -1 & 0
\end{bmatrix}.
\end{align}
The vertices $T_i$'s and $S_j$'s in $o\,\mathcal{C}$ (Figure \ref{Fig:edge-assign-BP8-FBP8}) are geometrically represented by the following position vectors in $\mathbb{R}^3$, where $\tau=\tfrac{2\sqrt{2}+1}{7}$,
\begin{alignat*}{4}
&T_1 =\tau\begin{bmatrix}[r]
 1 \\
 1 \\
 1
\end{bmatrix}, \quad
& T_2 =\tau\begin{bmatrix}[r]
 -1 \\
 1 \\
 1
\end{bmatrix}, \quad
& T_3 = \tau\begin{bmatrix}[r]
 -1 \\
 -1 \\
 1
\end{bmatrix}, \quad
& T_4 = \tau\begin{bmatrix}[r]
 1 \\
 -1 \\
 1
\end{bmatrix}, \\[1.5ex]
&T_5 = \tau\begin{bmatrix}[r]
 1 \\
 1 \\
 -1
\end{bmatrix}, \quad
&T_6 = \tau\begin{bmatrix}[r]
 -1 \\
 1 \\
 -1
\end{bmatrix}, \quad
&T_7 =\tau\begin{bmatrix}[r]
 -1 \\
 -1 \\
 -1
\end{bmatrix}, \quad
&T_8 = \tau\begin{bmatrix}[r]
 1 \\
 -1 \\
 -1
\end{bmatrix},
 \end{alignat*}
\begin{align*}
S_1 = \begin{bmatrix}[r]
 0 \\
 0 \\
 1
\end{bmatrix}, \
S_2 = \begin{bmatrix}[r]
 1 \\
 0 \\
 0
\end{bmatrix}, \
S_3 = \begin{bmatrix}[r]
 0 \\
 1 \\
 0
\end{bmatrix}, \
S_4 = \begin{bmatrix}[r]
 -1 \\
 0 \\
 0
\end{bmatrix}, \
S_5 = \begin{bmatrix}[r]
 0 \\
 -1 \\
 0
\end{bmatrix}, \
S_6 = \begin{bmatrix}[r]
 0 \\
 0 \\
 -1
\end{bmatrix}.
\end{align*}

The one-one correspondence between the vertices and the position vectors allows us to abuse the same notations $T_i$'s and $S_j$'s for both the vertices and the position vectors.

Each $\sigma\in G$ induces a permutation on the entries of the ordered $8$-tuple, denoted by $\sigma D$. For example,  {$-\sigma_7$}  in \eqref{Eq-sigma-eg} permutes $T_1,T_2,T_3,T_4,T_5,T_6$, $T_7,T_8$ into $T_6,T_5,T_1,T_2,T_7,T_8,T_4,T_3$ respectively. In other words,
\begin{align*}
{-\sigma_7} = \begin{bmatrix}
1 &  2 & 3 & 4 & 5 & 6 & 7 & 8 \\
6 & 5 & 1 & 2 & 7 & 8 & 4 & 3 
\end{bmatrix}.
\end{align*}

Two degree assignments, $D_1=[ d_1(T_1),$ $d_1(T_2), ..., d_1(T_8) ]$ and $D_2= [ d_2(T_1), d_2(T_2), ..., d_2(T_8)  ]$, where $d_l: \{T_i \}_{i \in I} \to \{ 1,3 \}$ for $l=1,2$, are called {\em isomorphic} if there is a $\sigma \in G$ such that $\sigma D_1 = D_2$, where $\sigma D_1 := [d_1(T_{\sigma(i)})]_{i \in I}$. For example, $D_1=[1,1,3,3,3,3,1,1]$ and $D_2=[3,3,1,1,1,1,3,3]$ are isomorphic as {$-\sigma_7 D_1 = D_2$} (Figure \ref{Fig:isom-deg-assign}) for {$-\sigma_7$} in \eqref{Eq-sigma-eg}.

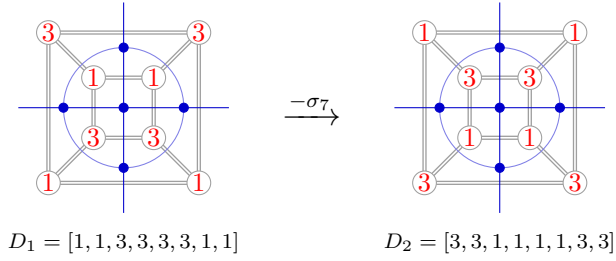
\begin{figure}[h!] 
\centering
\begin{tikzpicture}[>=latex]

\tikzmath{
\s=5;
\x=0.4;
\l=2.5;
\th=360/4;
\rr=0.15*\x;
}

\foreach \xs in {0,...,1} {

\tikzset{xshift=\xs*\s cm}

\foreach \a in {0,1,2,3} {

\tikzset{rotate=\a*\th}
	
\draw[gray!75, double, line width=0.5]
	(\x,\x) -- (-\x,\x)
	(\l*\x,\l*\x) -- (-\l*\x,\l*\x)
	(\x,\x) -- (\l*\x,\l*\x) 
;
}

\foreach \a in {0,1,2,3} {

\tikzset{rotate=\a*\th}

\draw[blue!80!black]
	(0,0) -- (2*\x,0) -- (3.5*\x,0)
;

\draw[->, blue!80!black]
;

}

\draw[blue!80!black!50] (0,0) circle (2*\x);

\filldraw[blue!80!black] 
	(0,0) circle (\rr)
	(2*\x,0) circle (\rr)
	(0,2*\x) circle (\rr)
	(-2*\x,0) circle (\rr)
	(0,-2*\x) circle (\rr)
;

}

\node at (0.5*\s, 0) {$\xrightarrow[]{{-\sigma_7}}$};

\begin{scope}[]

\node[inner sep=0.3,draw=gray!75,fill=white,shape=circle] at (\x,\x) {\footnotesize \textcolor{red}{$1$}};
\node[inner sep=0.3,draw=gray!75,fill=white,shape=circle] at (-\x,\x) {\footnotesize \textcolor{red}{$1$}};
\node[inner sep=0.3,draw=gray!75,fill=white,shape=circle] at (-\x,-\x) {\footnotesize \textcolor{red}{$3$}};
\node[inner sep=0.3,draw=gray!75,fill=white,shape=circle] at (\x,-\x) {\footnotesize \textcolor{red}{$3$}};


\node[inner sep=0.3,draw=gray!75,fill=white,shape=circle] at (\l*\x,\l*\x) {\footnotesize \textcolor{red}{$3$}};
\node[inner sep=0.3,draw=gray!75,fill=white,shape=circle] at (-\l*\x,\l*\x) {\footnotesize \textcolor{red}{$3$}};
\node[inner sep=0.3,draw=gray!75,fill=white,shape=circle] at (-\l*\x,-\l*\x) {\footnotesize \textcolor{red}{$1$}};
\node[inner sep=0.3,draw=gray!75,fill=white,shape=circle] at (\l*\x,-\l*\x) {\footnotesize \textcolor{red}{$1$}};


\node at (0,-1.8*\l*\x) {\scriptsize $D_1=[1,1,3,3,3,3,1,1]$};

\end{scope}

\begin{scope}[xshift=\s cm]

\node[inner sep=0.3,draw=gray!75,fill=white,shape=circle] at (\x,\x) {\footnotesize \textcolor{red}{$3$}};
\node[inner sep=0.3,draw=gray!75,fill=white,shape=circle] at (-\x,\x) {\footnotesize \textcolor{red}{$3$}};
\node[inner sep=0.3,draw=gray!75,fill=white,shape=circle] at (-\x,-\x) {\footnotesize \textcolor{red}{$1$}};
\node[inner sep=0.3,draw=gray!75,fill=white,shape=circle] at (\x,-\x) {\footnotesize \textcolor{red}{$1$}};


\node[inner sep=0.3,draw=gray!75,fill=white,shape=circle] at (\l*\x,\l*\x) {\footnotesize \textcolor{red}{$1$}};
\node[inner sep=0.3,draw=gray!75,fill=white,shape=circle] at (-\l*\x,\l*\x) {\footnotesize \textcolor{red}{$1$}};
\node[inner sep=0.3,draw=gray!75,fill=white,shape=circle] at (-\l*\x,-\l*\x) {\footnotesize \textcolor{red}{$3$}};
\node[inner sep=0.3,draw=gray!75,fill=white,shape=circle] at (\l*\x,-\l*\x) {\footnotesize \textcolor{red}{$3$}};


\node at (0,-1.8*\l*\x) {\scriptsize $D_2=[3,3,1,1,1,1,3,3]$};

\end{scope}

\end{tikzpicture}
\caption{Isomorphic degree assignments $D_1, D_2$}
\label{Fig:isom-deg-assign}
\end{figure}
Up to isomorphism under $G$, the argument yields $22$ $c$-degree assignments. 

We obtain an edge assignment based on a degree assignment and an assigned edge at $T_i$ is $T_iS_{i_k}$ for a neighbouring $S_{i_k}$ (where $i_k \in J$, with respect to the red dashed lines in the first picture of Figure \ref{Fig:edge-assign-BP8-FBP8}). Then two edge assignments are {\em isomorphic} means that there is a $\sigma \in G$ such that the assigned edge sets $\{ T_iS_{i_k} \}_{i \in I, i_k \in J}$ and $\{ T_{\sigma( i )} S_{\sigma(i_k)} \}_{i \in I, i_k \in J}$ are the same, otherwise they are called {\em non-isomorphic}. For example, we have 
\begin{align*}
E=\{ &T_1S_3, T_2S_3, T_3S_1, T_3S_4, T_3S_5, T_4S_1, T_4S_2, T_4S_5, \\ 
&T_5S_2, T_5S_3, T_5S_6, T_6S_3, T_6S_4, T_6S_6, T_7S_5, T_8S_5 \}, \\ 
E'= \{ &T_1S_3, T_2S_3, T_3S_1, T_3S_4, T_3S_5, T_4S_1, T_4S_2, T_4S_5, \\
&T_5S_2, T_5S_3, T_5S_6, T_6S_3, T_6S_4, T_6S_6, T_7S_6, T_8S_6 \},
\end{align*}
which are non-isomorphic edge assignments obtained from $D_1$ (Figure \ref{Fig:non-isom-edge-assign}).

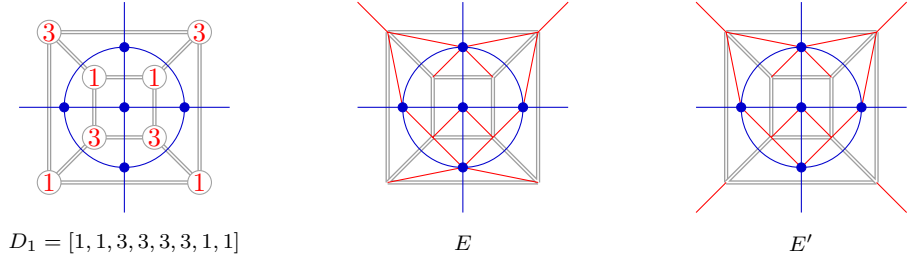
\begin{figure}[h!] 
\centering
\begin{tikzpicture}

\tikzmath{
\s=4.5;
\x=0.4;
\l=2.5;
\th=360/4;
\rr=0.15*\x;
}

\foreach \xs in {0,...,2} {

\tikzset{xshift=\xs*\s cm}

\foreach \a in {0,1,2,3} {

\tikzset{rotate=\a*\th}
	
\draw[gray!75, double, line width=0.5]
	(\x,\x) -- (-\x,\x)
	(\l*\x,\l*\x) -- (-\l*\x,\l*\x)
	(\x,\x) -- (\l*\x,\l*\x) 
;}}

\begin{scope}[]

\node[inner sep=0.3,draw=gray!75,fill=white,shape=circle] at (\x,\x) {\footnotesize \textcolor{red}{$1$}};
\node[inner sep=0.3,draw=gray!75,fill=white,shape=circle] at (-\x,\x) {\footnotesize \textcolor{red}{$1$}};
\node[inner sep=0.3,draw=gray!75,fill=white,shape=circle] at (-\x,-\x) {\footnotesize \textcolor{red}{$3$}};
\node[inner sep=0.3,draw=gray!75,fill=white,shape=circle] at (\x,-\x) {\footnotesize \textcolor{red}{$3$}};


\node[inner sep=0.3,draw=gray!75,fill=white,shape=circle] at (\l*\x,\l*\x) {\footnotesize \textcolor{red}{$3$}};
\node[inner sep=0.3,draw=gray!75,fill=white,shape=circle] at (-\l*\x,\l*\x) {\footnotesize \textcolor{red}{$3$}};
\node[inner sep=0.3,draw=gray!75,fill=white,shape=circle] at (-\l*\x,-\l*\x) {\footnotesize \textcolor{red}{$1$}};
\node[inner sep=0.3,draw=gray!75,fill=white,shape=circle] at (\l*\x,-\l*\x) {\footnotesize \textcolor{red}{$1$}};


\node at (0,-1.8*\l*\x) {\scriptsize $D_1=[1,1,3,3,3,3,1,1]$};

\draw[blue!80!black] (0,0) circle (2*\x);

\foreach \a in {0,1,2,3} {

\tikzset{rotate=\a*\th}

\draw[blue!80!black]
	(0,0) -- (2*\x,0) -- (3.5*\x,0)
;
}
\filldraw[blue!80!black] 
	(0,0) circle (\rr)
	(2*\x,0) circle (\rr)
	(0,2*\x) circle (\rr)
	(-2*\x,0) circle (\rr)
	(0,-2*\x) circle (\rr)
;
\end{scope}

\begin{scope}[xshift=\s cm]

\foreach \aa in {-1, 1} {

\tikzset{xscale=\aa}

\draw[red]
	(\x,\x) -- (0,2*\x) -- (\l*\x, \l*\x)
	(\x,-\x) -- (0,0)
	(\x,-\x) -- (2*\x,0)
	(\x,-\x) -- (0,-2*\x)
	(\l*\x, \l*\x) -- (2*\x,0)
	(\l*\x, \l*\x) -- (3.5*\x,3.5*\x)
	(\l*\x, -\l*\x) -- (0,-2*\x)
;}

\draw[blue!80!black] (0,0) circle (2*\x);

\foreach \a in {0,1,2,3} {

\tikzset{rotate=\a*\th}

\draw[blue!80!black]
	(0,0) -- (2*\x,0) -- (3.5*\x,0)
;}
\filldraw[blue!80!black] 
	(0,0) circle (\rr)
	(2*\x,0) circle (\rr)
	(0,2*\x) circle (\rr)
	(-2*\x,0) circle (\rr)
	(0,-2*\x) circle (\rr)
;
\node at (0,-1.8*\l*\x) {\scriptsize $E$};

\end{scope}

\begin{scope}[xshift=2*\s cm]

\foreach \aa in {-1, 1} {

\tikzset{xscale=\aa}

\draw[red]
	(\x,\x) -- (0,2*\x) -- (\l*\x, \l*\x)
	(\x,-\x) -- (0,0)
	(\x,-\x) -- (2*\x,0)
	(\x,-\x) -- (0,-2*\x)
	(\l*\x, \l*\x) -- (2*\x,0)
	(\l*\x, \l*\x) -- (3.5*\x,3.5*\x)
	(\l*\x, -\l*\x) -- (3.5*\x,-3.5*\x)
;}
\draw[blue!80!black] (0,0) circle (2*\x);

\foreach \a in {0,1,2,3} {

\tikzset{rotate=\a*\th}

\draw[blue!80!black]
	(0,0) -- (2*\x,0) -- (3.5*\x,0)
;}
\filldraw[blue!80!black] 
	(0,0) circle (\rr)
	(2*\x,0) circle (\rr)
	(0,2*\x) circle (\rr)
	(-2*\x,0) circle (\rr)
	(0,-2*\x) circle (\rr)
;
\node at (0,-1.8*\l*\x) {\scriptsize $E'$};

\end{scope}

\end{tikzpicture}
\caption{Edge assignments based on $D_1$}
\label{Fig:non-isom-edge-assign}
\end{figure}
Up to isomorphism under $G$, the argument yields $76$ edge assignments,  where one of them is $B\mathcal{O}$.
The arguments for $a$-edge assignments and $b$-edge assignments are analogous.
\end{proof}


\begin{prop}\label{Prop-FBP8} Up to isomorphism under $G'$, in each of the edge assignments below, with the exception of one corresponding to $FB\mathcal{O}$, there are
\begin{enumerate}
\item $29$ $c$-edge assignments corresponding to dihedral f-tilings;
\item no $b$-edge assignments corresponding to dihedral f-tilings;
\item $2$ $a$-edge assignments corresponding to dihedral f-tilings.
\end{enumerate}
\end{prop}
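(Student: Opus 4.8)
The plan is to run the argument of Proposition~\ref{Prop-BP8} essentially verbatim, with $FB\mathcal{O}$ in place of $B\mathcal{O}$ and with the automorphism group $G'$ of the spherical pseudo-deltoidal icositetrahedron $Fo\,\mathcal{C}$ in place of $G$.

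The first task is to identify $G'$ explicitly. Since $FB\mathcal{O}$ is obtained from $B\mathcal{O}$ by twisting the inner hemisphere through $\tfrac14\pi$ about the polar axis joining $S_1$ and $S_6$, every automorphism of $FB\mathcal{O}$ either fixes or interchanges these two poles, so the $G'$-orbit of $S_1$ has size at most $2$. Its stabiliser is the symmetry group of the (untwisted) polar cap; because the four mirror planes of $C_{4v}$ are spaced at $\tfrac14\pi$, this cap coincides with its own $\tfrac14\pi$-rotate, so $\mathrm{Stab}_{G'}(S_1)\cong C_{4v}$ has order $8$. Moreover the rotary reflection $\rho=R_z(\tfrac14\pi)\,\sigma$, where $\sigma$ is the reflection in the equatorial plane, preserves $FB\mathcal{O}$ — the reflection carries the bottom cap to the top region and the residual twist is absorbed because a $\tfrac12\pi$ rotation of a hemisphere is one of its symmetries — and $\rho$ swaps $S_1$ with $S_6$. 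By the Orbit--Stabiliser Theorem $|G'|=16$ (concretely $G'$ is the antiprismatic group $D_{4d}$), and I would list its $16$ elements as $3\times3$ matrices and record the induced permutations on the octant centres $T_1',\dots,T_4',T_5,\dots,T_8$ and on the vertices $S_j,S_j'$, exactly as in Proposition~\ref{Prop-BP8}.

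With $G'$ in hand the $c$-edge case (item~1) proceeds as before: a $c$-edge assignment turns each of the eight octants of $FB\mathcal{O}$ into an $X$- or a $Y$-octant, hence is encoded by a $c$-degree assignment $d\colon\{T_1',\dots,T_4',T_5,\dots,T_8\}\to\{1,3\}$, now subject also to the folding constraints at the poles $S_1,S_6$ (an even number of incident $c$-edges is deleted) and at the eight equatorial $\alpha^4\gamma^2$ vertices (each carries exactly two $c$-edges, so $0$ or $2$ are deleted, which is automatically compatible with \eqref{Eq-alt-ang-sums} by Lemmas~\ref{Lem-suff-alt-ang-sum} and~\ref{Lem-even-deg-iff-alt-sum}). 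I would enumerate the $G'$-orbits of the admissible $d$, then for each degree-$1$ octant choose which of its three $c$-edges is kept, re-impose the pole and equatorial constraints, and count the resulting edge assignments up to $G'$; discarding the one equal to $FB\mathcal{O}$ leaves the $29$ f-tilings. For the $a$-edges (item~3) the key extra observation is that each $\alpha^4\gamma^2$ vertex carries a \emph{single} $a$-edge, which therefore cannot be deleted without creating an odd degree; once these edges are frozen, the deletable $a$-edges in each open hemisphere form an $8$-cycle (alternating face-centres and edge-midpoints), whose only subgraphs with all degrees even are the empty one and the whole cycle, so up to $G'$ only $2$ nontrivial assignments remain. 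For the $b$-edges (item~2), Lemma~\ref{Lem-even-deg-iff-alt-sum} already forbids deleting two $b$-edges at an $\alpha^4\gamma^2$ vertex; since the only deletable $b$-edges inside an open hemisphere emanate from $S_1$ (resp.\ $S_6$) and deleting any nonempty even set of them forces an odd degree at the neighbouring edge-midpoints, every admissible nonempty $b$-deletion must touch an equatorial vertex and is hence forbidden, so no nontrivial $b$-edge assignment exists.

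The main obstacle is twofold. Conceptually, the delicate point is the correct determination of $G'$: one must see that the $\tfrac14\pi$ twist cuts the symmetry down to order $16$ and that the pole-interchanging symmetry survives only as a rotary reflection — a miscount here would propagate into every subsequent orbit count. Computationally, the heavy lifting is the $c$-edge enumeration, which, as in Proposition~\ref{Prop-BP8} (where the analogous count produced $22$ degree classes and then $76$ edge classes), is a finite but lengthy bookkeeping task best delegated to a computer; particular care is needed to thread the equatorial $\alpha^4\gamma^2$ constraints — absent in the $B\mathcal{O}$ case — through both the degree-assignment and the edge-assignment stages.
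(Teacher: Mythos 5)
Your proposal follows the paper's proof essentially verbatim: the same Orbit--Stabiliser computation giving $|G'|=16$ (the paper lists the $16$ matrices of what you correctly identify as $D_{4d}$), followed by the same two-stage degree-assignment/edge-assignment enumeration modulo $G'$ for the $c$-edges, with the $FB\mathcal{O}$ case discarded. The only real difference is that where the paper dismisses the $a$- and $b$-edge cases as ``analogous,'' you supply explicit combinatorial arguments (the frozen $a$-edge at each $\alpha^4\gamma^2$ vertex, the two $8$-cycles of deletable $a$-edges, and the parity obstruction at the meridianal $\gamma^4$ midpoints for $b$-edges), and these check out and correctly reproduce the counts $2$ and $0$.
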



These edge assignments can be seen in Figures \ref{Fig:X-Y-octants-ne2e} and \ref{Fig:FBP8-a}  in the Appendix. The symmetry group of each tiling is also given.

\begin{proof} We follow the same argument in the proof of Proposition \ref{Prop-BP8} by replacing $G$, $T_1, ..., T_8$, and $S_1, ..., S_6$ by $G'$ and $T_1', T_2', T_3', T_4', T_5, T_6, T_7, T_8$, $S_2', S_3', S_4', S_5'$ and $S_1, ..., S_6$ respectively.  Recall that the pseudo-deltoidal icositetrahedron $Fo\,\mathcal{C}$ is obtained from $o\,\mathcal{C}$ by rotating a hemisphere $\tfrac{1}{4}\pi$ clockwise along the equator (Figure \ref{Fig:edge-assign-BP8-FBP8}). Let $R$ denote such rotation that
\begin{align*}
R 
:=
\begin{bmatrix}[r]
\tfrac{1}{\sqrt{2}} & \tfrac{1}{\sqrt{2}}  & 0 \\
-\tfrac{1}{\sqrt{2}}  & \tfrac{1}{\sqrt{2}} & 0 \\
0~ & 0~ & ~1
\end{bmatrix},
\end{align*}
and again $\tau=\tfrac{2\sqrt{2}+1}{7}$.
Then the vertices $T_i$'s and $T_k'$'s and $S_j$'s and $S_l'$'s are vertices of $Fo\,\mathcal{C}$ represented by the following position vectors, 
\begin{alignat*}{4}
&T_1' = R \tau \begin{bmatrix}[r]
 1 \\
 1 \\
 1
\end{bmatrix}, \quad
&T_2' = R \tau\begin{bmatrix}[r]
 -1 \\
 1 \\
 1
\end{bmatrix}, \quad
&T_3' = R \tau\begin{bmatrix}[r]
 -1 \\
 -1 \\
 1
\end{bmatrix}, \quad
&T_4' = R \tau  \begin{bmatrix}[r]
 1 \\
 -1 \\
 1
\end{bmatrix}, \\[1.5ex]
&T_5 = \tau\begin{bmatrix}[r]
 1 \\
 1 \\
 -1
\end{bmatrix}, \quad
&T_6 =  \tau\begin{bmatrix}[r]
 -1 \\
 1 \\
 -1
\end{bmatrix}, \quad
&T_7 =  \tau\begin{bmatrix}[r]
 -1 \\
 -1 \\
 -1
\end{bmatrix}, \quad
&T_8 =  \tau\begin{bmatrix}[r]
 1 \\
 -1 \\
 -1
\end{bmatrix},
 \end{alignat*}

\begin{align*}
&S_1 = \begin{bmatrix}[r]
 0 \\
 0 \\
 1
\end{bmatrix}, \
S_2 = \begin{bmatrix}[r]
 1 \\
 0 \\
 0
\end{bmatrix}, \
S_3 = \begin{bmatrix}[r]
 0 \\
 1 \\
 0
\end{bmatrix}, \
S_4 = \begin{bmatrix}[r]
 -1 \\
 0 \\
 0
\end{bmatrix}, \
S_5 = \begin{bmatrix}[r]
 0 \\
 -1 \\
 0
\end{bmatrix}, \
S_6 = \begin{bmatrix}[r]
 0 \\
 0 \\
 -1
\end{bmatrix}, \\[1.5ex]
& ~~~~~~S_2' = \tfrac{1}{\sqrt{2}}\begin{bmatrix}[r]
 1 \\
 -1 \\
 0
\end{bmatrix}, \
S_3' = \tfrac{1}{\sqrt{2}}\begin{bmatrix}[r]
 1 \\
 1 \\
 0
\end{bmatrix}, \
S_4' = \tfrac{1}{\sqrt{2}}\begin{bmatrix}[r]
 -1 \\
 1 \\
 0
\end{bmatrix}, \
S_5' = \tfrac{1}{\sqrt{2}}\begin{bmatrix}[r]
 -1 \\
 -1 \\
 0
\end{bmatrix}.
\end{align*}

By the Orbit-Stabiliser Theorem, we determine that $G'$ is a group of order $16$. In fact,  the orbit of $S_1$ (equivalently $S_6$) is $\{ S_1, S_6 \}$ and the stabiliser of $G'_{S_1}$ fixes both $S_1, S_6$ and has $4$ rotational symmetries and $4$ mirror symmetries. Hence, we get $\vert G' \vert  =  \vert G' \cdot S_1 \vert \vert G'_{S_1} \vert = 2 \cdot 8$. Then the Classification Theorem of finite subgroups of $SO(3)$ determines the group elements in terms of the following matrices
\begin{equation*}
\begin{alignedat}{4}  \label{List-FT-Matrices}
&\begin{bmatrix}[r]
\,\,\,1 & 0 & 0 \\
0 & \,\,\,\,\,1 & 0 \\
0 & 0 & \,\,\,\,1
\end{bmatrix}, \
&\begin{bmatrix}[r]
0 & \,\,\,\,\,1 & 0 \\
\,\,\,1 & 0 & 0 \\
0 & 0 & \,\,\,\,1
\end{bmatrix}, \
&\begin{bmatrix}[r]
\,\,\,1 & 0 & 0 \\
0 & -1 & 0 \\
0 & 0 & \,\,\,\,1
\end{bmatrix},\
&\begin{bmatrix}[r]
0 & \,\,\,\,\,1 & 0 \\
-1 & 0 & 0 \\
0 & 0 & \,\,\,\,1
\end{bmatrix}, \\[1.5ex] 
&\begin{bmatrix}[r]
-1 & 0 & 0 \\
0 & -1 & 0 \\
0 & 0 & \,\,\,1
\end{bmatrix}, \
&\begin{bmatrix}[r]
0 & -1 & 0 \\
-1 & 0 & 0 \\
0 & 0 & \,\,\,1
\end{bmatrix}, \
&\begin{bmatrix}[r]
-1 & 0 & 0 \\
0 &  \,\,\,\,1 & 0 \\
0 & 0 & \,\,\,1
\end{bmatrix}, \
&\begin{bmatrix}[r]
0 & -1 & 0 \\
\,\,\,\,1 & 0 & 0 \\
0 & 0 & \,\,\,\,1
\end{bmatrix},\\[1.5ex] 
&\begin{bmatrix}[r]
\tfrac{-1}{\sqrt{2}} & \tfrac{-1}{\sqrt{2}}  & 0 \\
\tfrac{-1}{\sqrt{2}}  & \,\,\,\tfrac{1}{\sqrt{2}} & 0 \\
0 & 0 & -1
\end{bmatrix}, \
 &\begin{bmatrix}[r]
\tfrac{-1}{\sqrt{2}} & \tfrac{-1}{\sqrt{2}}  & 0 \\
\tfrac{1}{\sqrt{2}}  & \,\,\,\tfrac{-1}{\sqrt{2}} & 0 \\
0 & 0 & -1
\end{bmatrix}, \
&\begin{bmatrix}[r]
\tfrac{-1}{\sqrt{2}} & \tfrac{1}{\sqrt{2}}  & 0 \\
\tfrac{-1}{\sqrt{2}}  & \,\,\,\tfrac{-1}{\sqrt{2}} & 0 \\
0 & 0 & -1
\end{bmatrix}, \
&\begin{bmatrix}[r]
\tfrac{-1}{\sqrt{2}} & \tfrac{1}{\sqrt{2}}  & 0 \\
\tfrac{1}{\sqrt{2}}  & \,\,\,\tfrac{1}{\sqrt{2}} & 0 \\
0 & 0 & -1
\end{bmatrix}, \\[1.5ex]
&\begin{bmatrix}[r]
\tfrac{1}{\sqrt{2}} & \tfrac{-1}{\sqrt{2}}  & 0 \\
\tfrac{-1}{\sqrt{2}}  & \,\,\,\tfrac{-1}{\sqrt{2}} & 0 \\
0 & 0 & -1
\end{bmatrix}, \
&\begin{bmatrix}[r]
\tfrac{1}{\sqrt{2}} & \tfrac{-1}{\sqrt{2}}  & 0 \\
\tfrac{1}{\sqrt{2}}  &\,\,\, \tfrac{1}{\sqrt{2}} & 0 \\
0 & 0 & -1
\end{bmatrix}, \
&\begin{bmatrix}[r]
\tfrac{1}{\sqrt{2}} & \tfrac{1}{\sqrt{2}}  & 0 \\
\tfrac{-1}{\sqrt{2}}  & \,\,\,\tfrac{1}{\sqrt{2}} & 0 \\
0 & 0 & -1
\end{bmatrix}, \
&\begin{bmatrix}[r]
\tfrac{1}{\sqrt{2}} & \tfrac{1}{\sqrt{2}}  & 0 \\
\tfrac{1}{\sqrt{2}}  & \,\,\,\tfrac{-1}{\sqrt{2}} & 0 \\
0 & 0 & -1
\end{bmatrix}.
\end{alignedat}
\end{equation*}

Up to isomorphism under $G'$, we obtain $30$ $c$-degree assignments. By the same argument, up to isomorphism they result in $30$ $c$-edge assignments.

The arguments for $a$-edge assignments and $b$-edge assignments are analogous.
\end{proof}

\section{The Graph Isomorphism Approach}
\label{Sec:Graph}

In this section, we present a different solution to the problem. For the edge assignments derived from both $B\mathcal{O}$ and $FB\mathcal{O}$, the underlying graph of the resulting assignment using the same labels in Figure \ref{Fig:edge-assign-BP8-FBP8} is represented by the adjacency matrix $A$ as follows
\begin{align}
\label{Eq-Adj-Matrix-A}
A=
\begin{bmatrix}
M_T & M_{TS} \\
M_{TS}^{\dagger} & M_S
\end{bmatrix},
\end{align}
where $M_{TS}^{\dagger}$ is the transpose of $M_{TS}$. For the edge assignments in $B\mathcal{O}$, the matrices $M_T$ (resp. $M_{TS}$ and $M_S$) are the adjacency matrix between $T_{*}, T_{*}$ (resp. $T_{*}, S_{*}$, and $S_{*}, S_{*}$) such that
\begin{align}
\label{Eq-BP8-MT}
M_T &= \begin{bmatrix}
0 & T_{12} & 0 & T_{14} & T_{15} & 0 & 0 & 0 \\
T_{12} & 0 & T_{23} & 0 & 0 & T_{26} & 0 & 0 \\
0 & T_{23} & 0 & T_{34} & 0 & 0 & T_{37} & 0 \\
T_{14} & 0 & T_{34} & 0 & 0 & 0 & 0 & T_{48} \\
T_{15} & 0 & 0 & 0 & 0 & T_{56} & 0 & T_{58} \\
0 & T_{26} & 0 & 0 & T_{56} & 0 & T_{67} & 0 \\
0 & 0 & T_{37} & 0 & 0 & T_{67} & 0 & T_{78} \\
0 & 0 & 0 & T_{48} & T_{58} & 0 & T_{78} & 0
\end{bmatrix}, \\[0.8ex]
\label{Eq-BP8-MTS}
M_{TS} &= \begin{bmatrix}
T_{11} & T_{12} & T_{13} & 0 & 0 & 0 \\
T_{21} & 0 & T_{22} & T_{23} & 0 & 0 \\
T_{31} & 0 & 0 & T_{32} & T_{33} & 0 \\
T_{41} & T_{42} & 0 & 0 & T_{43} & 0 \\
0 & T_{51} & T_{52} & 0 & 0 & T_{53} \\
0 & 0 & T_{61} & T_{62} & 0 & T_{63} \\
0 & 0 & 0 & T_{71} & T_{72} & T_{73} \\
0 & T_{81} & 0 & 0 & T_{82} & T_{83} 
\end{bmatrix},  \\[0.8ex]
\label{Eq-BP8-MS}
M_S &= \begin{bmatrix}
0 & S_{12} & S_{13} & S_{14} & S_{15} & 0 \\
S_{12} & 0 & S_{23} & 0 & S_{25} & S_{26} \\
S_{13} & S_{23} & 0 & S_{34} & 0 & S_{36} \\
S_{14} & 0 & S_{34} & 0 & S_{45} & S_{46} \\
S_{15} & S_{25} & 0 & S_{45} & 0 & S_{56} \\
0 & S_{26} & S_{36} & S_{46} & S_{56} & 0
\end{bmatrix}.
\end{align}


For the edge assigments in $FB\mathcal{O}$, the matrices $M_T$ (resp. $M_{TS}$ and $M_S$) is the adjacency matrix between $T_{*}, T_{*}'$ (resp. $T_{*}, T_{*}', S_{*}, S_{*}'$, and $S_{*}, S_{*}'$) such that
\begin{align}
\label{Eq-FBP8-MT}
M_T &= \begin{bmatrix}
0 & T_{12} & 0 & T_{14} & 0 & 0 & 0 & 0 \\
T_{12} & 0 & T_{23} & 0 & 0 & 0 & 0 & 0 \\
0 & T_{23} & 0 & T_{34} & 0 & 0 & 0 & 0 \\
T_{14} & 0 & T_{34} & 0 & 0 & 0 & 0 & 0 \\
0 & 0 & 0 & 0 & 0 & T_{56} & 0 & T_{58} \\
0 & 0 & 0 & 0 & T_{56} & 0 & T_{67} & 0 \\
0 & 0 & 0 & 0 & 0 & T_{67} & 0 & T_{78} \\
0 & 0 & 0 & 0 & T_{58} & 0 & T_{78} & 0
\end{bmatrix}, \\[0.8ex]
\label{Eq-FBP8-MTS}
M_{TS} &= \begin{bmatrix}
T_{11} & T_{12} & T_{13} & 0 & 0 & 1 & 0 & 0 & 0 & 0 \\
T_{21} & 0 & T_{22} & T_{23} & 0 & 0 & 1 & 0 & 0 & 0 \\
T_{31} & 0 & 0 & T_{32} & T_{33} & 0 & 0 & 1 & 0 & 0 \\
T_{41} & T_{42} & 0 & 0 & T_{43} & 0 & 0 & 0 & 1 & 0 \\
0 & 0 & 1 & 0 & 0 & T_{51} & T_{52} & 0 & 0 & T_{53} \\
0 & 0 & 0 & 1 & 0 & 0 & T_{61} & T_{62} & 0 & T_{63} \\
0 & 0 & 0 & 0 & 1 & 0 & 0 & T_{71} & T_{72} & T_{73} \\
0 & 1 & 0 & 0 & 0 & T_{81} & 0 & 0 & T_{82} & T_{83}
\end{bmatrix}, \\[0.8ex]
\label{Eq-FBP8-MS}
M_{S} &= \begin{bmatrix}
0 & S_{12} & S_{13} & S_{14} & S_{15} & 0 & 0 & 0 & 0 & 0 \\
S_{12} & 0 & 0 & 0 & 0 & S_{22} & 0 & 0 & S_{25} & 0 \\
S_{13} & 0 & 0 & 0 & 0 & S_{32} & S_{33} & 0 & 0 & 0 \\
S_{14} & 0 & 0 & 0 & 0 & 0 & S_{43} & S_{44} & 0 & 0 \\
S_{15} & 0 & 0 & 0 & 0 & 0 & 0 & S_{54} & S_{55} & 0 \\
0 & S_{22} & S_{32} & 0 & 0 & 0 & 0 & 0 & 0 & S_{26} \\
0 & 0 & S_{33} & S_{43} & 0 & 0 & 0 & 0 & 0 & S_{36} \\
0 & 0 & 0 & S_{44} & S_{54} & 0 & 0 & 0 & 0 & S_{46} \\
0 & S_{25} & 0 & 0 & S_{55} & 0 & 0 & 0 & 0 & S_{56} \\
0 & 0 & 0 & 0 & 0 & S_{26} & S_{36} & S_{46} & S_{56} & 0
\end{bmatrix},
\end{align}
where the $S_{\ast}, S_{\ast}'$ are in the order of $S_1, S_2', ..., S_5', S_2, ..., S_5, S_6$.

\begin{prop} \label{Prop-Graph-c} Up to graph isomorphism, there are $75$ graphs of $c$-edge assignments corresponding to dihedral f-tilings derived from the $B\mathcal{O}$ and $29$ derived from the $FB\mathcal{O}$.
\end{prop}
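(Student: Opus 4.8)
The plan is to reduce Proposition~\ref{Prop-Graph-c} to the counting already established in Propositions~\ref{Prop-BP8} and~\ref{Prop-FBP8}, by showing that the combinatorial notion of graph isomorphism of the edge-assignment graphs coincides with the geometric notion of isomorphism under $G$ (resp. $G'$) used in Section~\ref{Sec:Sym}. Concretely, to each $c$-edge assignment in $B\mathcal{O}$ we associate the labelled graph whose adjacency matrix is $A$ in \eqref{Eq-Adj-Matrix-A}, with the blocks \eqref{Eq-BP8-MT}--\eqref{Eq-BP8-MS} specialised so that each variable entry $T_{i k}$, $S_{jk}$ is $1$ if the corresponding edge is a $c$-edge present in the assignment and $0$ otherwise (the non-variable zeros record pairs of vertices that are never joined in $B\mathcal{O}$). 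The first step is to verify that this graph faithfully encodes the edge assignment: the vertex set is $\{T_i\}\cup\{S_j\}$, and since the $T$'s and $S$'s play structurally distinct roles (degrees, adjacency pattern in $M_T$ versus $M_S$), any graph isomorphism must respect the $T$/$S$ partition, so two assignments give isomorphic graphs if and only if some permutation of $\{T_i\}$ together with a compatible permutation of $\{S_j\}$ carries one assigned edge set onto the other.

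The second and main step is to identify the group of admissible permutations. I would argue that a permutation pair $(\pi,\rho)$ of $(\{T_i\},\{S_j\})$ is a graph automorphism of the \emph{full} structure of $B\mathcal{O}$ (i.e. preserves $M_T$, $M_S$ and the incidence pattern of $M_{TS}$ simultaneously, ignoring the $c$-labels) precisely when it is realised by an element of $G=\Delta(2,3,4)$ acting on $o\,\mathcal{C}$. One inclusion is immediate: every $\sigma\in G$ permutes the $T_i$ and $S_j$ and preserves all adjacencies of $B\mathcal{O}$, hence induces such a $(\pi,\rho)$; this is exactly the action $\sigma D$ used in the proof of Proposition~\ref{Prop-BP8}. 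For the reverse inclusion one shows the automorphism group of the underlying graph of $B\mathcal{O}$ (equivalently, of the deltoidal icositetrahedron as an abstract polytope / its edge graph) is exactly $G$, which is a standard fact about the deltoidal icositetrahedron and can be checked directly from the matrices: the adjacency pattern rigidly determines which $S_j$ are ``polar'' versus ``equatorial'' relative to a chosen $T_i$, pinning down the permutation once the image of one incident flag is fixed, giving $|{\rm Aut}|=48=|G|$. Granting this, a graph isomorphism between two $c$-edge assignments is the same as an element of $G$ carrying one assigned-edge set to the other, so ``up to graph isomorphism'' and ``up to isomorphism under $G$'' count the same equivalence classes; by Proposition~\ref{Prop-BP8}(1) there are $75$ of them (the $76$ in the proof minus the one equal to $B\mathcal{O}$, which is excluded here as well). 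The $FB\mathcal{O}$ case is identical with $A$ built from \eqref{Eq-FBP8-MT}--\eqref{Eq-FBP8-MS}: one checks the automorphism group of the underlying graph of $FB\mathcal{O}$ is $G'$ (order $16$, as computed via Orbit--Stabiliser in the proof of Proposition~\ref{Prop-FBP8}), whence ``up to graph isomorphism'' matches ``up to isomorphism under $G'$'', yielding $30-1=29$ classes by Proposition~\ref{Prop-FBP8}(1).

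A subtlety to address is that in the $FB\mathcal{O}$ matrices \eqref{Eq-FBP8-MTS}--\eqref{Eq-FBP8-MS} some entries are the constant $1$ (the edges joining inner $T'_k$/$S'_l$ vertices to the equatorial ring, which are $b$- or $a$-edges, always present), so the underlying graph is not simply ``$B\mathcal{O}$ with $c$-edges'' — I must make sure the fixed $1$'s are treated as part of the ambient graph that every automorphism preserves, so that ${\rm Aut}$ is computed for that graph and indeed equals $G'$ rather than something larger. The other routine check is that distinct $c$-degree assignments never accidentally produce isomorphic graphs in a way not seen in Section~\ref{Sec:Sym}, which follows from the faithfulness in the first step. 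The main obstacle I anticipate is precisely the automorphism-group computation — proving ${\rm Aut}$(underlying graph) is \emph{no larger} than $G$ (resp. $G'$); the clean way is to observe that the edge graph of $o\,\mathcal{C}$ has a unique realisation as a convex polytope up to symmetry (Steinitz / rigidity), or, more elementarily, to note that fixing one vertex $T_1$ and one neighbour determines the action on the whole $3$-connected planar graph, giving the order bound directly, after which equality with $|G|$ (resp. $|G'|$) closes the argument.
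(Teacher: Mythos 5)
Your proposal takes a genuinely different route from the paper's: Section \ref{Sec:Graph} is intended as an \emph{independent} verification, in which Lemma \ref{Lem-even-deg-iff-alt-sum} converts the folding condition into parity constraints on the column sums of $M_{TS}$ (the system \eqref{Eq-P8-c-assign-cond}), and the adjacency matrices satisfying these constraints are then enumerated directly and sieved by pairwise graph-isomorphism tests, yielding $76$ (resp.\ $30$) classes of which one is $B\mathcal{O}$ (resp.\ $FB\mathcal{O}$). You instead try to deduce the counts from Propositions \ref{Prop-BP8} and \ref{Prop-FBP8} by identifying graph-isomorphism classes with $G$- (resp.\ $G'$-) orbits. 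That identification is where the gap lies.

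The easy direction is fine: every $\sigma\in G$ induces a graph isomorphism between the corresponding assignment graphs, so the number of graph-isomorphism classes is \emph{at most} $75$ (resp.\ $29$). For equality you need the converse --- that two assignments with isomorphic graphs lie in the same $G$-orbit --- and your argument for this, computing $\mathrm{Aut}$ of the underlying graph of $B\mathcal{O}$ with \emph{all} $c$-edges present and invoking rigidity of $o\,\mathcal{C}$, does not deliver it. A graph isomorphism $\phi:\mathcal{G}(A_1)\to\mathcal{G}(A_2)$ only has to match the edges actually present; it is not a priori an automorphism of the ambient structure, because the \emph{absent} potential $c$-edges (the zero values among the variable entries of $M_{TS}$) are invisible to it. Even granting that $\phi$ preserves the $T/S$ partition (which itself needs an argument, since after deleting $c$-edges the degree ranges of $T$- and $S$-vertices overlap: both can be $4$ or $6$), $\phi$ only restricts to an automorphism $\pi$ of the cube graph on $\{T_i\}$ and an automorphism $\rho$ of the octahedron graph on $\{S_j\}$; each of these groups has order $48$, so the pairs $(\pi,\rho)$ form a group of order $48^2$, of which only a diagonal copy of $G$ of order $48$ preserves the full incidence pattern of $M_{TS}$. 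A pair outside this diagonal could in principle carry the assigned edge set of $A_1$ onto that of $A_2$ without preserving the set of all $24$ potential incidences, merging two distinct $G$-orbits into a single graph-isomorphism class and making the count strictly smaller than $75$. Ruling this out is precisely the content that must be supplied (and is what the paper's explicit isomorphism check establishes computationally); as written, your argument yields only the upper bounds $\le 75$ and $\le 29$.
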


\begin{proof} For a desired tiling obtained by an $c$-edge assignment in $B\mathcal{O}$, its underlying graph $\mathcal{G}(A)$ is represented by the adjacency matrix $A$ \eqref{Eq-Adj-Matrix-A} such that the parameters in $M_T$ \eqref{Eq-BP8-MT} and $M_S$ \eqref{Eq-BP8-MS} are equal to $1$. That is, 
\begin{align*}
&M_T = \begin{bmatrix}
0 & 1 & 0 & 1 & 1 & 0 & 0 & 0 \\
1 & 0 & 1 & 0 & 0 & 1 & 0 & 0 \\
0 & 1 & 0 & 1 & 0 & 0 & 1 & 0 \\
1 & 0 & 1 & 0 & 0 & 0 & 0 & 1 \\
1 & 0 & 0 & 0 & 0 & 1 & 0 & 1 \\
0 & 1 & 0 & 0 & 1 & 0 & 1 & 0 \\
0 & 0 & 1 & 0 & 0 & 1 & 0 & 1 \\
0 & 0 & 0 & 1 & 1 & 0 & 1 & 0
\end{bmatrix}, \quad
M_{TS} = \begin{bmatrix}
T_{11} & T_{12} & T_{13} & 0 & 0 & 0 \\
T_{21} & 0 & T_{22} & T_{23} & 0 & 0 \\
T_{31} & 0 & 0 & T_{32} & T_{33} & 0 \\
T_{41} & T_{42} & 0 & 0 & T_{43} & 0 \\
0 & T_{51} & T_{52} & 0 & 0 & T_{53} \\
0 & 0 & T_{61} & T_{62} & 0 & T_{63} \\
0 & 0 & 0 & T_{71} & T_{72} & T_{73} \\
0 & T_{81} & 0 & 0 & T_{82} & T_{83} 
\end{bmatrix}, 
\end{align*}

\begin{align*}
&M_S = \begin{bmatrix}
0 & 1 & 1 & 1 & 1 & 0 \\
1 & 0 & 1 & 0 & 1 & 1 \\
1 & 1 & 0 & 1 & 0 & 1 \\
1 & 0 & 1 & 0 & 1 & 1 \\
1 & 1 & 0 & 1 & 0 & 1 \\
0 & 1 & 1 & 1 & 1 & 0
\end{bmatrix}.
\end{align*}
By Lemma \ref{Lem-even-deg-iff-alt-sum}, it suffices to enforce even degree at each vertex, which will guarantee the folding conditions. The even degree assumption implies that $M_{TS}$ has even sum in each column. Hence the following holds for the graph of the desired tilings
\begin{align}\label{Eq-P8-c-assign-cond}
\begin{cases}
&T_{11} + T_{21} + T_{31} + T_{41} \equiv 0 \mod 2, \\ 
&T_{12} + T_{42} + T_{51} + T_{81} \equiv 0 \mod 2, \\ 
&T_{13} + T_{22} + T_{52} + T_{61} \equiv 0 \mod 2, \\ 
&T_{23} + T_{32} + T_{62} + T_{71} \equiv 0 \mod 2, \\ 
&T_{33} + T_{43} + T_{72} + T_{82}  \equiv 0 \mod 2, \\ 
&T_{53} + T_{63} + T_{73} + T_{83}  \equiv 0 \mod 2. 
\end{cases}
\end{align}

To determine the tilings, we conduct the {\em enumeration process} of their adjacency matrices satisfying the following conditions
\begin{enumerate}
\item even sum in each column in $M_{TS}$, 
\item every pair of graphs $\mathcal{G}(A_1), \mathcal{G}(A_2)$ are not graph-isomorphic.
\end{enumerate}


Up to graph-isomorphism, the process yields $76$ graphs,  where one of them is the $B\mathcal{O}$.

Similarly, for a desired tiling derived from an $c$-edge assignment derived from $FB\mathcal{O}$, its underlying graph $\mathcal{G}(A)$ is represented by the adjacency matrix $A$ \eqref{Eq-Adj-Matrix-A}, where the parameters in $M_T$ \eqref{Eq-FBP8-MT} and $M_S$ \eqref{Eq-FBP8-MS} are equal to $1$. That is,
\begin{align*}
M_{T} &= \begin{bmatrix}
0 & 1 & 0 & 1 & 0 & 0 & 0 & 0 \\
1 & 0 & 1 & 0 & 0 & 0 & 0 & 0 \\
0 & 1 & 0 & 1 & 0 & 0 & 0 & 0 \\
1 & 0 & 1 & 0 & 0 & 0 & 0 & 0 \\
0 & 0 & 0 & 0 & 0 & 1 & 0 & 1 \\
0 & 0 & 0 & 0 & 1 & 0 & 1 & 0 \\
0 & 0 & 0 & 0 & 0 & 1 & 0 & 1 \\
0 & 0 & 0 & 0 & 1 & 0 & 1 & 0
\end{bmatrix}, \\[0.8ex]
M_{TS} &= \begin{bmatrix}
T_{11} & T_{12} & T_{13} & 0 & 0 & 1 & 0 & 0 & 0 & 0 \\
T_{21} & 0 & T_{22} & T_{23} & 0 & 0 & 1 & 0 & 0 & 0 \\
T_{31} & 0 & 0 & T_{32} & T_{33} & 0 & 0 & 1 & 0 & 0 \\
T_{41} & T_{42} & 0 & 0 & T_{43} & 0 & 0 & 0 & 1 & 0 \\
0 & 0 & 1 & 0 & 0 & T_{51} & T_{52} & 0 & 0 & T_{53} \\
0 & 0 & 0 & 1 & 0 & 0 & T_{61} & T_{62} & 0 & T_{63} \\
0 & 0 & 0 & 0 & 1 & 0 & 0 & T_{71} & T_{72} & T_{73} \\
0 & 1 & 0 & 0 & 0 & T_{81} & 0 & 0 & T_{82} & T_{83}
\end{bmatrix}, \\[0.8ex]
M_{S} &=
\begin{bmatrix}
0 & 1 & 1 & 1 & 1 & 0 & 0 & 0 & 0 & 0 \\
1 & 0 & 0 & 0 & 0 & 1 & 0 & 0 & 1 & 0 \\
1 & 0 & 0 & 0 & 0 & 1 & 1 & 0 & 0 & 0 \\
1 & 0 & 0 & 0 & 0 & 0 & 1 & 1 & 0 & 0 \\
1 & 0 & 0 & 0 & 0 & 0 & 0 & 1 & 1 & 0 \\
0 & 1 & 1 & 0 & 0 & 0 & 0 & 0 & 0 & 1 \\
0 & 0 & 1 & 1 & 0 & 0 & 0 & 0 & 0 & 1 \\
0 & 0 & 0 & 1 & 1 & 0 & 0 & 0 & 0 & 1 \\
0 & 1 & 0 & 0 & 1 & 0 & 0 & 0 & 0 & 1 \\
0 & 0 & 0 & 0 & 0 & 1 & 1 & 1 & 1 & 0
\end{bmatrix}.
\end{align*}
By Lemma \ref{Lem-even-deg-iff-alt-sum}, it suffices to enforce even degree at each vertex. The even degree assumption implies that $M_{TS}$ has even sums in the first and the last columns, and odd sums in the other columns. Up to graph-isomorphism, the enumeration process yields $30$ graphs, where one of them is the $FB\mathcal{O}$.
\end{proof}

\begin{prop} \label{Prop-Graph-b} Up to graph isomorphism, there are $12$ graphs of $b$-edge assignments corresponding to dihedral f-tilings derived from the $B\mathcal{O}$ and no such graphs derived from the $FB\mathcal{O}$.
\end{prop}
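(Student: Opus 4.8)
The plan is to imitate the proof of Proposition~\ref{Prop-Graph-c}, only permuting the roles of the three edge types. For a tiling produced by a $b$-edge assignment in $B\mathcal{O}$, every $a$-edge and every $c$-edge is present, so in the adjacency matrix $A$ of \eqref{Eq-Adj-Matrix-A} the parameters in $M_T$ \eqref{Eq-BP8-MT} and in $M_{TS}$ \eqref{Eq-BP8-MTS} are all set to $1$, while $M_S$ \eqref{Eq-BP8-MS} now carries the $b$-edge parameters. The entry $S_{ij}$ records whether the midpoint of the octahedron edge joining $S_i$ and $S_j$ is retained; a single $b$-edge cannot be removed in isolation, since that would leave this midpoint of odd degree, so the two $b$-edges at a midpoint are removed together, merging the two flanking pairs of M\"obius triangles into two copies of $\triangle \bar{a}c^2$. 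First I would write this matrix form down explicitly, exactly as in the $c$-case.

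By Lemma~\ref{Lem-even-deg-iff-alt-sum} it then suffices to force even degree at every vertex. Each $T_i$ has degree $6$ regardless (three $a$-edges, three $c$-edges), and a retained midpoint has degree $4$, so the only non-trivial conditions sit at the $S_j$: each $S_j$ must be incident to an even number of retained $b$-edges, i.e.\ every row (equivalently column) of $M_S$ has even sum --- six parity equations with a single linear dependency, in analogy with \eqref{Eq-P8-c-assign-cond}. I would then carry out the \emph{enumeration process} as before: list all admissible $A$ (with $M_T, M_{TS}$ fixed to $1$ and $M_S$ of even row sums), discard graph-isomorphic duplicates, and note via Lemma~\ref{Lem-even-deg-iff-alt-sum} that every surviving graph genuinely represents an f-tiling. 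The process yields $14$ graphs. Two of them are monohedral --- $B\mathcal{O}$ itself (all $b$-edges present) and the tiling by $24$ copies of $\triangle \bar{a}c^2$ (all $b$-edges absent, which, unlike the all-kite configuration in the $c$-case, \emph{does} satisfy the even-degree conditions) --- so exactly $12$ of them correspond to dihedral f-tilings derived from $B\mathcal{O}$.

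For $FB\mathcal{O}$ I would argue that the trivial assignment is essentially the only one. Set $M_T$ \eqref{Eq-FBP8-MT} and $M_{TS}$ \eqref{Eq-FBP8-MTS} to $1$ and let $M_S$ \eqref{Eq-FBP8-MS} carry the $b$-parameters. Beyond the even-degree parities, the proof of Lemma~\ref{Lem-even-deg-iff-alt-sum} already shows that no $b$-edge at an $\alpha^4\gamma^2$ vertex can be removed (removing two of them forces a quadrilateral with edge cycle $acac$, impossible among the prototiles). One checks from the structure of $FB\mathcal{O}$ that every $b$-edge either meets an $\alpha^4\gamma^2$ vertex or shares an octahedron-edge midpoint with one that does; together with the pairing observation above, this pins every $b$-edge in place. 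Hence $FB\mathcal{O}$ admits only the trivial $b$-edge assignment, which is monohedral, so no dihedral f-tiling is derived from $FB\mathcal{O}$.

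The main obstacle is twofold. Quantitatively, showing that the even-row-sum matrices fall into exactly $14$ isomorphism classes is a finite search over the $2^{7}$ solutions of the parity system together with graph-isomorphism tests; it is routine in principle (and implicitly already needed for Proposition~\ref{Prop-Graph-c}) but really wants a computer, and the one subtlety when reading off the statement is remembering to also discard the all-$\triangle \bar{a}c^2$ monohedral solution. For $FB\mathcal{O}$ the obstacle is instead the combinatorial bookkeeping: one must pin down the vertex types of $FB\mathcal{O}$ precisely enough to verify that the $\alpha^4\gamma^2$ vertices constrain \emph{every} $b$-edge, and this is exactly where the equatorial twist makes the local structure least transparent.
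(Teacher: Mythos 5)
Your proposal is correct and follows essentially the same route as the paper: fix the parameters of $M_T$ and $M_{TS}$ to $1$, impose even row sums on $M_S$, enumerate up to graph isomorphism to get $14$ graphs, and discard the two monohedral ones ($B\mathcal{O}$ and the all-removed case, which the paper calls the triangular subdivision of the cube), leaving $12$. For $FB\mathcal{O}$ the paper merely says the argument is ``analogous,'' whereas you supply the explicit reason --- every $b$-edge either meets an $\alpha^4\gamma^2$ vertex (where Lemma \ref{Lem-even-deg-iff-alt-sum} forbids removal) or is paired at a $\gamma^4$ midpoint with one that does --- and this added detail is accurate.
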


\begin{proof} Following the same argument in Proposition \ref{Prop-Graph-c}, the underlying graph $\mathcal{G}(A)$ of a $b$-edge assignment is represented by the adjacency matrix $A$ \eqref{Eq-Adj-Matrix-A} such that the parameters in $M_T$ \eqref{Eq-BP8-MT} and $M_{TS}$ \eqref{Eq-BP8-MTS} are all $1$'s. 

By Lemma \ref{Lem-even-deg-iff-alt-sum}, it suffices to enforce even degree at each vertex. The even degree assumption implies that $M_S$ \eqref{Eq-BP8-MS} has even sum in each row. Then the same enumeration process yields $14$ graphs up to graph isomorphism, one gives the triangular subdivision of the cube, one gives the $B\mathcal{O}$ and the remaining graphs give the $12$ dihedral f-tilings.

The argument for the $b$-edge assignments derived from the $FB\mathcal{O}$ is analogous by using $M_T$ \eqref{Eq-FBP8-MT} and $M_S$ \eqref{Eq-FBP8-MTS} with the parameters equal to $1$. 
\end{proof}

\begin{prop} \label{Prop-Graph-a} Up to graph isomorphism, there are $5$ graphs of $a$-edge assignments corresponding to dihedral f-tilings derived from the $B\mathcal{O}$ and 2 derived from the $FB\mathcal{O}$.
\end{prop}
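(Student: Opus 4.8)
The plan is to reuse the enumeration machinery of Propositions \ref{Prop-Graph-c} and \ref{Prop-Graph-b} verbatim, now with the $a$-edges playing the role of the free parameters. Recall that an $a$-edge joins a face centre (a $T$-vertex) to a midpoint of an octahedron edge, and that at such a midpoint the only moves compatible with the folding conditions are to keep both incident $a$-edges (leaving the midpoint as a degree-$4$ vertex $\gamma^4$) or to delete both of them (the midpoint is swallowed into a straight edge $\bar{b}$), exactly the dichotomy used in the proof of Lemma \ref{Lem-even-deg-iff-alt-sum}. Consequently an $a$-edge assignment is faithfully encoded on the reduced graph on $\{T_i\}\cup\{S_j\}$: the free block is $M_T$, an entry $1$ meaning both $a$-edges at the corresponding midpoint are retained and $0$ meaning both are deleted, while $M_{TS}$ (the $c$-edges) and $M_S$ (the $b$-edges) are set to all $1$'s. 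Thus for assignments derived from $B\mathcal{O}$ the matrix $A$ of \eqref{Eq-Adj-Matrix-A} has $M_{TS}$ from \eqref{Eq-BP8-MTS} and $M_S$ from \eqref{Eq-BP8-MS} equal to all $1$'s and $M_T$ from \eqref{Eq-BP8-MT} free, and for assignments derived from $FB\mathcal{O}$ the same holds with \eqref{Eq-FBP8-MTS}, \eqref{Eq-FBP8-MS} all $1$'s and \eqref{Eq-FBP8-MT} free.

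Invoking Lemma \ref{Lem-even-deg-iff-alt-sum}, it then suffices to impose even degree at every vertex, which becomes a system of mod-$2$ constraints on the entries of $M_T$. The new feature, absent in the $b$- and $c$-cases, is a parity reversal: a face centre $T_i$ in $B\mathcal{O}$ is incident to an odd number (namely three) of $c$-edges, so evenness of its degree forces an \emph{odd} sum in each row of $M_T$ (equivalently, an odd number of the three surrounding midpoints is kept at each face centre); the parities at the $S_j$'s and at the midpoints are then automatic. For $FB\mathcal{O}$ the equatorial twist redistributes these incidences — the phenomenon already visible in the forced $1$'s in \eqref{Eq-FBP8-MTS} — so which rows of $M_T$ must have odd sum has to be re-derived from the twisted incidence pattern; here one must also recheck, as in the proof of Lemma \ref{Lem-even-deg-iff-alt-sum}, that the vertex $\alpha^4\gamma^2$ of $FB\mathcal{O}$ imposes no further obstruction to deleting $a$-edges.

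I would then run the enumeration process of Proposition \ref{Prop-Graph-c}: list all $M_T$ obeying the parity constraints, assemble $A$, and keep one representative of each graph-isomorphism class. For $B\mathcal{O}$ this produces a list of $6$ graphs; exactly one of them, the one with every $a$-edge present, is $B\mathcal{O}$ itself (monohedral by the M\"obius triangle), and discarding it leaves the $5$ graphs of dihedral f-tilings. (Note that, because the $B\mathcal{O}$ constraint is odd-sum rather than even-sum, the ``delete every $a$-edge'' configuration — the monohedral triangular subdivision of the octahedron by $\triangle \bar{b}c^2$, whose face centres have odd degree $3$ — never enters the list, in contrast to the triangular subdivision of the cube in Proposition \ref{Prop-Graph-b}.) For $FB\mathcal{O}$ the same process, with the $FB\mathcal{O}$ matrices, produces $3$ graphs, one of which is $FB\mathcal{O}$; discarding it leaves the $2$ graphs.

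The enumeration itself is a small computer search, identical in spirit to Proposition \ref{Prop-Graph-c}, and poses no real difficulty. The delicate point — the main obstacle — is the set-up step for $FB\mathcal{O}$: correctly reading off, from the twisted incidence structure, which vertices are forced to see an odd versus an even number of $c$-edges, so that the mod-$2$ system on $M_T$ genuinely encodes the folding conditions, together with verifying that the only graph to be removed from the resulting list is $FB\mathcal{O}$ and not some further monohedral tiling.
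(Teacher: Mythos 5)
Your proposal is correct and follows essentially the same (very terse) argument as the paper: take $M_T$ as the free block with all entries of $M_{TS}$ and $M_S$ set to $1$, impose even degree via Lemma \ref{Lem-even-deg-iff-alt-sum}, and run the enumeration process of Proposition \ref{Prop-Graph-c}; your condition of \emph{odd} row sums of $M_T$ for $B\mathcal{O}$ is the intended one (the paper's phrase ``$M_S$ has odd sum in each row'' is evidently a slip for $M_T$), and your observation that the all-deleted monohedral configuration is excluded automatically by this parity is a genuine point the paper leaves implicit. The one step you defer, the $FB\mathcal{O}$ parity, resolves to \emph{even} row sums of $M_T$ in every row, because each face centre of $FB\mathcal{O}$ carries, besides its three $c$-edges, one non-removable $a$-edge to an equatorial $\alpha^4\gamma^2$ vertex (one of the forced $1$'s in \eqref{Eq-FBP8-MTS}); since each row of \eqref{Eq-FBP8-MT} has only two free entries, this forces each of the two $4$-cycles to be kept or deleted in its entirety, yielding your intermediate count of $3$ and, after discarding $FB\mathcal{O}$, the claimed $2$.
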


\begin{proof} For $a$-edge assignments derived from the $B\mathcal{O}$, the argument is analogous by using $M_{TS}$ \eqref{Eq-BP8-MTS} and $M_{S}$ \eqref{Eq-BP8-MS} with parameters equal to constant $1$'s. The assumption of even degree implies that $M_{S}$ has odd sum in each row. Similar argument for $a$-edge assignments derived from the $FB\mathcal{O}$ applies to $M_{TS}$ \eqref{Eq-FBP8-MTS} and $M_S$ \eqref{Eq-FBP8-MS}.
\end{proof}

\section{Appendix} 
\label{Sec:Appendix}

In this section we present the plane representations of the dihedral f-tilings induced by the M\"obius triangle $(2,3,4)$ and the corresponding  geometric and combinatorial structure.
We use the notation $B_j \mathcal{O}_k$ and $FB_j \mathcal{O}_k$ for the dihedral f-tiling that corresponds to the $k$th element of the $j$-edge assignments ($j = a, b, c$) derived from the $B\mathcal{O}$ and $FB\mathcal{O}$ monohedral structures, respectively. 
Regarding the f-tilings with prototiles being the M\"obius triangle and the (\emph{i}) kite,  (\emph{ii}) isosceles triangle $\bar{a}c^2$,  (\emph{iii}) isosceles triangle $\bar{b}c^2$,  consider (\emph{i}) Figures \ref{Fig:BP8-c-I} and \ref{Fig:X-Y-octants-ne2e}, (\emph{ii}) Figure \ref{Fig:BP8-b}, (\emph{iii}) Figures \ref{Fig:BP8-a} and \ref{Fig:FBP8-a}, respectively. 
By pressing each image of these figures,  the corresponding 3D model can be viewed.  These 3D representations are also available at  {\small \url{https://www.geogebra.org/m/zfnap4pe}}.

\begin{figure}[h!] 
\centering
,
\end{alignedat}
\end{equation*}
we represent below each automorphism $\pm \sigma_k \in O_h$ for $k = 5,  \ldots, 24$},
{
\begin{align} \label{Eq-Oh}
&\sigma_5 = \sigma_2 \sigma_3,&
&\sigma_6 = \sigma_2 \sigma_4,&
&\sigma_7 = \sigma_3 \sigma_2,&
&\sigma_8 = \sigma_3 \sigma_4,& \\ \notag
&\sigma_9 = \sigma_4 \sigma_3,& 
&\sigma_{10} = \sigma_2 \sigma_3 \sigma_2,&
&\sigma_{11} = \sigma_3 \sigma_2 \sigma_3,&
&\sigma_{12} = \sigma_3 \sigma_4 \sigma_3,& \\ \notag
&\sigma_{13} = \sigma_2 \sigma_3 \sigma_4,&
&\sigma_{14} = \sigma_2 \sigma_4 \sigma_3,&
&\sigma_{15} = \sigma_3 \sigma_2 \sigma_4,& 
&\sigma_{16} = \sigma_4 \sigma_3 \sigma_2,& \\ \notag
&\sigma_{17} = \sigma_2 \sigma_3 \sigma_2 \sigma_3,&
&\sigma_{18} = \sigma_3 \sigma_4 \sigma_3 \sigma_2,&
&\sigma_{19} = \sigma_4 \sigma_3 \sigma_2 \sigma_3,&
&\sigma_{20} = \sigma_3 \sigma_2 \sigma_4 \sigma_3,& \\ \notag
&\sigma_{21} = \sigma_3 \sigma_2 \sigma_3 \sigma_4,&
&\sigma_{22} = \sigma_2 \sigma_3 \sigma_2 \sigma_4,&
&\sigma_{23} = \sigma_2 \sigma_4 \sigma_3 \sigma_2,&
&\sigma_{24} = \sigma_4 \sigma_2 \sigma_3 \sigma_4.&
\end{align}
}
{Similarly for $G' = D_8$, using $\sigma_1'=I$ and
\begin{equation*}
\begin{alignedat}{3} 
&\sigma_2' = \begin{bmatrix}[r]
\tfrac{1}{\sqrt{2}} & \tfrac{1}{\sqrt{2}}  & 0 \\
\tfrac{-1}{\sqrt{2}}  &\,\,\, \tfrac{1}{\sqrt{2}} & 0 \\
0 & 0 & -1
\end{bmatrix}, ~~
&\sigma' = \begin{bmatrix}[r]
1 & \,\,\,\,\,0 & 0 \\
\,\,\,0 & -1 & 0 \\
0 & 0 & \,\,\,\,1
\end{bmatrix},
\end{alignedat}
\end{equation*}
we represent below each $\sigma'_k \in G'$ for $k = 1,  \ldots,16$,
\begin{align*}
\sigma'_k  &= (\sigma_2')^{k-1},  ~~~k = 1,  \ldots, 8, \\ 
\sigma'_k  &= \sigma' (\sigma_2')^{k-9},  ~k = 9,  \ldots, 16.
\end{align*}
}
{The data of the f-tilings are provided in Tables \ref{combstruct1}-\ref{combstruct5}.}

{\begin{table}[h!]
\begin{center}
\bgroup \scriptsize 
\def\arraystretch{1.5}
    \begin{tabular}[]{ | c | c | c | c | c |} 
		\hline
		\multirow{2}{*}{f-tilings} & \multirow{2}{*}{\parbox[c]{2cm}{\centering Symmetry group}} & \multicolumn{2}{c |}{Prototiles} & \multirow{2}{*}{\parbox[c]{2cm}{\centering Generators (Determinants)}} \\
		\cline{3-4}
		 &  & $\# \triangle abc$ & $\# \square a^2b^2$ &  \\
		 \hhline{|=====|} 
$B_c \mathcal{O}_3$
 & \multirow{5}{*}{$C_1$} & 16 & 16 & \multirow{5}{*}{$\sigma_1$ $(+)$}  \\  \cline{1-1} \cline{3-4}
$B_c \mathcal{O}_{k}$, $k=14,...,21,23$ &  & 20 & 14 &  \\ \cline{1-1} \cline{3-4}
$B_c \mathcal{O}_{k}$, $k=25,...,27,39,...,41,44$ & & 24 & 12 &  \\ \cline{1-1} \cline{3-4}
$B_c \mathcal{O}_{k}$, $k=46,47,58,59,61,65$ & & 28 & 10 &  \\ \cline{1-1} \cline{3-4}
$B_c \mathcal{O}_{67}$  & & {32} & {8} & \\ \hline
	\end{tabular}
\egroup
\end{center}
\vspace*{-0.6cm} 
\caption{{Data of $B_c \mathcal{O}_k$,  $k = 1,  \ldots, 75$}}
\label{combstruct1}
\end{table}}

\addtocounter{table}{-1}

{\begin{table}[h!]
\begin{center}
\bgroup \scriptsize 
\def\arraystretch{1.5}
    \begin{tabular}[]{ | c | c | c | c | c |} 
		\hline
		\multirow{2}{*}{f-tilings} & \multirow{2}{*}{\parbox[c]{2cm}{\centering Symmetry group}} & \multicolumn{2}{c |}{Prototiles} & \multirow{2}{*}{\parbox[c]{2cm}{\centering Generators (Determinants)}} \\
		\cline{3-4}
		 &  & $\# \triangle abc$ & $\# \square a^2b^2$ &  \\
		 \hhline{|=====|} 
$B_c \mathcal{O}_4, B_c \mathcal{O}_5$ & \multirow{18}{*}{$C_2$} & \multirow{4}{*}{16} & \multirow{4}{*}{16} & {$\sigma_{20}$ $(+)$} \\ \cline{1-1} \cline{5-5}
{$B_c \mathcal{O}_6$}  & &  & &  {{$\sigma_{2}$ $(-)$}} \\
\cline{1-1} \cline{5-5}
$B_c \mathcal{O}_7$ & &  & &  {{$-\sigma_{4}$ $(+)$}} \\
\cline{1-1} \cline{5-5}
$B_c \mathcal{O}_9$  & &  &  & {{$\sigma_{11}$ $(-)$}} \\ \cline{1-1} \cline{3-5}
$B_c \mathcal{O}_{22}$  & &  \multirow{2}{*}{20} & \multirow{2}{*}{14} &  {{$-\sigma_{20}$ $(-)$}} \\ \cline{1-1} \cline{5-5}
$B_c \mathcal{O}_{24}$  & & & & {$\sigma_{3}$ $(-)$} \\ \cline{1-1} \cline{3-5}
$B_c \mathcal{O}_{k}$, $k=28,...,31,35$ & & \multirow{6}{*}{24} & \multirow{6}{*}{12} &  {{$-\sigma_{17}$ $(-)$}} \\ \cline{1-1} \cline{5-5}
$B_c \mathcal{O}_{32}$, $B_c \mathcal{O}_{33}$  & &  &  & {{$-\sigma_{10}$ $(+)$}}  \\ \cline{1-1} \cline{5-5}
$B_c \mathcal{O}_{38}$, $B_c \mathcal{O}_{42}$,  $B_c \mathcal{O}_{45}$  & & &  & {{$-\sigma_{2}$ $(+)$}} \\ \cline{1-1} \cline{5-5}
$B_c \mathcal{O}_{43}$ & & &  & {{$-\sigma_{6}$ $(-)$}} \\ \cline{1-1} \cline{5-5}
$B_c \mathcal{O}_{53}$  & &  &  &  {{$\sigma_{6}$ $(+)$}} \\ \cline{1-1} \cline{5-5}
$B_c \mathcal{O}_{54}$  & &  &  & {{$-\sigma_{12}$ $(+)$}} \\ \cline{1-1} \cline{3-5} 
$B_c \mathcal{O}_{48}$  & & \multirow{2}{*}{28} & \multirow{2}{*}{10} &   {{$\sigma_{4}$ $(-)$}} \\ \cline{1-1} \cline{5-5}
$B_c \mathcal{O}_{60}$, $B_c \mathcal{O}_{62}$, $B_c \mathcal{O}_{66}$ & & & & {{$-\sigma_{6}$ $(-)$}} \\ \cline{1-1} \cline{3-5} 
$B_c \mathcal{O}_{50}$  & & \multirow{3}{*}{32} & \multirow{3}{*}{8} & {{$-\sigma_{17}$ $(-)$}} \\ \cline{1-1} \cline{5-5}
$B_c \mathcal{O}_{63}$, $B_c \mathcal{O}_{64}$  & &  & & {{$-\sigma_{10}$ $(+)$}} \\ \cline{1-1} \cline{5-5}
$B_c \mathcal{O}_{68}$  & &   &  &  {{$-\sigma_{6}$ $(-)$}} \\ \cline{1-1} \cline{3-5} 
$B_c \mathcal{O}_{69}$ & & {36} & {6}  & {{$-\sigma_{6}$ $(-)$}} \\ \hline
$B_c \mathcal{O}_{57}$  & $C_6$ &  24 & 12 & $\sigma_{16}$ $(-)$ \\ \hline
{$B_c \mathcal{O}_8$}  &  \multirow{9}{*}{$C_2 \times C_2$} & \multirow{3}{*}{16} & \multirow{3}{*}{16} & 
{$\sigma_{11}$},  {$\sigma_{12}$} $(-,-)$ \\ \cline{1-1} \cline{5-5}
{$B_c \mathcal{O}_{10}$}  &  &  &  & 
{$\sigma_{2}$},  {$\sigma_{11}$} $(-,-)$ \\ \cline{1-1} \cline{5-5}
{$B_c \mathcal{O}_{11}$}  &  &  &  & 
{$\pm \sigma_{11}$} $(-,+)$ \\ \cline{1-1} \cline{3-5} 
{$B_c \mathcal{O}_{34}$,  $B_c \mathcal{O}_{36}$, $B_c \mathcal{O}_{37}$}  & &  \multirow{2}{*}{24} & \multirow{2}{*}{12} & {$\sigma_{3}$}, {$-\sigma_{17}$} $(-,-)$ \\ \cline{1-1} \cline{5-5}
{$B_c \mathcal{O}_{52}$}  &  &   &  & 
{$-\sigma_{1}$}, {$\sigma_{12}$} $(-,-)$ \\
\cline{1-1} \cline{3-5}
{$B_c \mathcal{O}_{49}$}  &  & \multirow{3}{*}{32} &  \multirow{3}{*}{8}& {$-\sigma_{2}$}, {$-\sigma_{17}$} $(+,-)$ \\ \cline{1-1} \cline{5-5}
{$B_c \mathcal{O}_{70}$}  &  &  &  & 
{$-\sigma_{3}$},  {$-\sigma_{17}$} $(+,-)$ \\ \cline{1-1} \cline{5-5}
{$B_c \mathcal{O}_{71}$}  &  &  &  & 
{$-\sigma_{1}$},   {$-\sigma_{17}$}, $(-,-)$ \\  \cline{1-1} \cline{3-5} 
{$B_c \mathcal{O}_{73}$}  & & 40 & 4 & 
{$\sigma_{3}$},  {$-\sigma_{17}$} $(-,-)$ \\ \hline
{$B_c \mathcal{O}_1$}  & {$C_2 \times C_4$} & 16 & 16 &
{$\sigma_{2}$},  {$\sigma_{19}$} $(-,+)$ \\ \hline
{$B_c \mathcal{O}_{72}$} & {$C_2 \times C_2 \times C_2$} & 32 & 8 & 
{$-\sigma_1$},  {$\sigma_3$},  {$-\sigma_{17}$} $(-, +, -)$\\ \hline
{$B_c \mathcal{O}_{56}$}  & \multirow{2}{*}{$D_3$} &  {24} & {12} & 
{$\sigma_{6}$},  {$-\sigma_{16}$} $(+,+)$ \\ 
\cline{1-1} \cline{3-5}
{$B_c \mathcal{O}_{75}$}  & & 36 & 6 & 
{$-\sigma_{6}$},  {$-\sigma_{14}$} $(-,+)$ \\
\hline
{$B_c \mathcal{O}_2$}  &  \multirow{4}{*}{$D_4$} &  \multirow{2}{*}{16} &  \multirow{2}{*}{16} & {$\sigma_{6}$},  {$\sigma_{19}$} $(+,+)$ \\
\cline{1-1} \cline{5-5}
{$B_c \mathcal{O}_{13}$}  &  &&  &  {$\sigma_{6}$},  {$-\sigma_{21}$} $(+,-)$ \\
\cline{1-1} \cline{3-5}
{$B_c \mathcal{O}_{51}$}  &  & \multirow{2}{*}{32} &  \multirow{2}{*}{8}  & {$\sigma_4$},  {$\sigma_{19}$} $(-,+)$ \\ 
\cline{1-1} \cline{5-5}
{$B_c \mathcal{O}_{74}$}  & && & {$-\sigma_{2}$},  {$-\sigma_{24}$} $(-,+)$ \\ \hline
	\end{tabular}
\egroup
\end{center}
\vspace*{-0.6cm} 
\caption{{Data of $B_c \mathcal{O}_k$,  $k = 1,  \ldots, 75$ (cont.)}}
\end{table}}

\addtocounter{table}{-1}

{\begin{table}[h!]
\begin{center}
\bgroup \scriptsize 
\def\arraystretch{1.5}
    \begin{tabular}[]{ | c | c | c | c | c |} 
		\hline
		\multirow{2}{*}{f-tilings} & \multirow{2}{*}{\parbox[c]{2cm}{\centering Symmetry group}} & \multicolumn{2}{c |}{Prototiles} & \multirow{2}{*}{\parbox[c]{2cm}{\centering Generators (Determinants)}} \\
		\cline{3-4}
		 &  & $\# \triangle abc$ & $\# \square a^2b^2$ &  \\
		 \hhline{|=====|} 
$B_c \mathcal{O}_{55}$  & $D_6$ & 24 & 12 & 
{$-\sigma_{6}$},  {$\sigma_{16}$} $(-,-)$ \\ \hline
$B_c \mathcal{O}_{12}$  & $C_2 \times D_4$ &  16 & 16 &  
{$\sigma_{2}$},  {$\sigma_{11}$},   {$\sigma_{18}$} $(-,-,-)$ \\ 
		\hline
	\end{tabular}
\egroup
\end{center}
\vspace*{-0.6cm} 
\caption{{Data of $B_c \mathcal{O}_k$,  $k = 1,  \ldots, 75$} (cont.)}
\end{table}}

{\begin{table}[h!]
\begin{center}
\bgroup \scriptsize 
\def\arraystretch{1.5}
    \begin{tabular}[]{ | c | c | c | c | c |} 
		\hline
		\multirow{2}{*}{f-tilings} & \multirow{2}{*}{\parbox[c]{2cm}{\centering Symmetry group}} & \multicolumn{2}{c |}{Prototiles} & \multirow{2}{*}{\parbox[c]{2cm}{\centering Generators (Determinants)}} \\
		\cline{3-4}
		 &  & $\# \triangle abc$ & $\# \triangle \bar{b}c^2$ &  \\
		 \hhline{|=====|} 
$B_b \mathcal{O}_4$ & \multirow{2}{*}{$C_2$} &  20 & 14 &
\multirow{2}{*}{$\sigma_{4}$ $(-)$} \\ 
\cline{1-1} \cline{3-4}
$B_b \mathcal{O}_8$  & & 28 & 10 & \\ 
\hline
$B_b \mathcal{O}_{9}$  & $C_4$ & 24 & 12 & 
{$-\sigma_{5}$} $(-)$ \\ \hline
$B_b \mathcal{O}_2$  &   \multirow{3}{*}{$C_2 \times C_2$} & 16  &  16 & 
{$\sigma_{3}$},  {$-\sigma_{17}$} $(+,-)$ \\ \cline{1-1} \cline{3-5}
{$B_b \mathcal{O}_{3}$}  &  & 24  &  12 &
{$\sigma_{4}$},  {$-\sigma_{6}$} $(-,-)$ \\ 
\cline{1-1} \cline{3-5}
{$B_b \mathcal{O}_{11}$}  &   & 32  &  8 & 
{$\sigma_{2}$}, {$\sigma_{4}$} $(-,-)$ \\ 
\hline
{$B_b \mathcal{O}_{1}$}  & \multirow{2}{*}{$D_3$} &  12 & 8 &
{$-\sigma_{6}$},  {$-\sigma_{15}$} $(-,+)$ \\
\cline{1-1} \cline{3-5}
{$B_b \mathcal{O}_{7}$}  & & 36 & 6 &  
{$\sigma_{9}$},  {$\sigma_{12}$} $(+,-)$ \\  \hline
{$B_b \mathcal{O}_{6}$}  & \multirow{2}{*}{$D_6$} & \multirow{2}{*}{24} & \multirow{2}{*}{12} & 
{$\sigma_{4}$},  {$-\sigma_{8}$} $(-,+)$ \\
\cline{1-1} \cline{5-5}
{$B_b \mathcal{O}_{10}$}  & & & & 
{$\sigma_{4}$},  {$-\sigma_{22}$} $(-,-)$ \\
\hline
$B_b \mathcal{O}_{5}$ & \multirow{2}{*}{$C_2 \times D_4$}  & 16 & 16 &  
{$\sigma_{2}$},   {$-\sigma_{17}$},  {$\sigma_{19}$} $(-,-,+)$ \\
\cline{1-1} \cline{3-5}
{$B_b \mathcal{O}_{12}$}  & &  32 & 8 &
{$\sigma_{2}$},  {$\sigma_{11}$},  {$\sigma_{24}$} $(-,-,+)$ \\
\hline
	\end{tabular}
\egroup
\end{center}
\vspace*{-0.6cm} 
\caption{{Data of $B_b \mathcal{O}_k$, $k = 1,  \ldots, 12$}}
\label{combstruct2}
\end{table}}

{\begin{table}[h!]
\begin{center}
\bgroup \scriptsize 
\def\arraystretch{1.5}
    \begin{tabular}[]{ | c | c | c | c | c |} 
		\hline
		\multirow{2}{*}{f-tilings} & \multirow{2}{*}{\parbox[c]{2cm}{\centering Symmetry group}} & \multicolumn{2}{c |}{Prototiles} & \multirow{2}{*}{\parbox[c]{2cm}{\centering Generators (Determinants)}} \\
		\cline{3-4}
		 &  & $\# \triangle abc$ & $\# \triangle \bar{b}c^2$ &  \\
		 \hhline{|=====|} 
{$B_a \mathcal{O}_4$} & $C_2 \times C_2$  & 24 & 12 &
{$\sigma_{10}$},  {$-\sigma_{17}$} $(-,-)$ \\ 
\hline
$B_a \mathcal{O}_{3}$  & \multirow{2}{*}{$D_4$} & 16 & 16 &
{$\sigma_{2}$},  {$-\sigma_{18}$} $(-,+)$\\ 
\cline{1-1} \cline{3-5}
$B_a \mathcal{O}_{2}$  &  & 32 &  8 & 
{$\sigma_{11}$},  {$\sigma_{19}$} $(-,+)$ \\ 
\hline
{$B_a \mathcal{O}_{5}$}  & $D_6$ & 24 & 12 & {$\sigma_{4}$},  {$-\sigma_{23}$} $(-,-)$ \\ 
\hline
$B_a \mathcal{O}_{1}$  & $C_2 \times D_4$  & 16 & 16 &   
{$\sigma_{2}$},  {$-\sigma_{17}$},  {$\sigma_{19}$} $(-,-,+)$ \\
\hline
	\end{tabular}
\egroup
\end{center}
\vspace*{-0.6cm} 
\caption{{Data of $B_a \mathcal{O}_k$, $k = 1,  \ldots, 5$}}
\label{combstruct3}
\end{table}}

{\begin{table}[h!]
\begin{center}
\bgroup \scriptsize 
\def\arraystretch{1.5}
    \begin{tabular}[]{ | c | c | c | c | c |} 
		\hline
		\multirow{2}{*}{f-tilings} & \multirow{2}{*}{\parbox[c]{2cm}{\centering Symmetry group}} & \multicolumn{2}{c |}{Prototiles} & \multirow{2}{*}{\parbox[c]{2cm}{\centering Generators (Determinants)}} \\
		\cline{3-4}
		 &  & $\# \triangle abc$ & $\# \square a^2b^2$ &  \\
		 \hhline{|=====|} 
{$FB_c \mathcal{O}_2$}
 & \multirow{4}{*}{$C_1$} & 16 & 16 & \multirow{4}{*}{$\sigma'_1$ $(+)$}  \\  \cline{1-1} \cline{3-4}
{$FB_c \mathcal{O}_{9}$}  & & 20 & 14 &  \\ \cline{1-1} \cline{3-4}
$FB_c \mathcal{O}_{14}$,  $FB_c \mathcal{O}_{15}$,  $FB_c \mathcal{O}_{16}$ & & 24 & 12  &  \\ \cline{1-1} \cline{3-4}
$FB_c \mathcal{O}_{24}$ & & 28 & 10  &  \\ \hline
{$FB_c \mathcal{O}_4$}  &  \multirow{14}{*}{$C_2$} &  
\multirow{3}{*}{16}  & \multirow{3}{*}{16} & {$\sigma'_{11}$} $(-)$ \\
\cline{1-1} \cline{5-5}
$FB_c \mathcal{O}_5$ & &  & &  {$\sigma'_{16}$} $(-)$ \\
\cline{1-1} \cline{5-5}
{$FB_c \mathcal{O}_6$}  & &  &  &  {$\sigma'_{12}$} $(-)$ \\
\cline{1-1} \cline{3-5}
$FB_c \mathcal{O}_k$, $k=8,10,...,13$ & & 20 & 8 & $\sigma'_{15}$ $(+)$ \\ \cline{1-1} \cline{3-5}
{$FB_c \mathcal{O}_{17}$, $FB_c \mathcal{O}_{22}$
}  & &  \multirow{3}{*}{24} & \multirow{3}{*}{12} & {{$\sigma'_{13}$}} $(+)$ \\
\cline{1-1} \cline{5-5}
{$FB_c \mathcal{O}_{21}$
}  & &  &   & {{$\sigma'_{14}$}} $(-)$ \\
\cline{1-1} \cline{5-5}
{$FB_c \mathcal{O}_{23}$
}  & &  &   & {{$\sigma'_{10}$}} $(-)$ \\
\cline{1-1} \cline{3-5}
$FB_c \mathcal{O}_{25}$ &  & 28 & 10  & {$\sigma'_{13}$} $(+)$ \\
\cline{1-1} \cline{3-5}
{$FB_c \mathcal{O}_{19}$}  &  &  
 \multirow{3}{*}{32} &  \multirow{3}{*}{8} & {$\sigma'_{11}$} $(-)$ \\
\cline{1-1} \cline{5-5}
{$FB_c \mathcal{O}_{27}$}  & &  & &  {{$\sigma'_{12}$}} $(-)$ \\
\cline{1-1} \cline{5-5}
{$FB_c \mathcal{O}_{28}$}  & &  &  &  {{$\sigma'_{14}$}} $(-)$ \\
\cline{1-1} \cline{3-5}
{$FB_c \mathcal{O}_{26}$}  & &  36  & 6 &  {$\sigma'_{13}$} $(+)$ \\
\cline{1-1} \cline{3-5}
$FB_c \mathcal{O}_{29}$ &  & 40  & 4 & {$\sigma'_{11}$} $(-)$ \\ 
\hline
{$FB_c \mathcal{O}_{1}$}  &  \multirow{3}{*}{$C_2 \times C_2$}  &  
\multirow{2}{*}{16} & \multirow{2}{*}{16} & {$\sigma'_{5}$},  {$\sigma'_{11}$} $(+, -)$ \\
\cline{1-1} \cline{5-5}
{$FB_c \mathcal{O}_{3}$}  & &  & &  {$\sigma'_{5}$},  {$\sigma'_{12}$} $(+, -)$ \\
\cline{1-1} \cline{3-5}
$FB_c \mathcal{O}_{18}$  & & 32  & 8 & {$\sigma'_{5}$},  {$\sigma'_{11}$} $(+, -)$ \\
\hline
$FB_c \mathcal{O}_{20}$ & {$D_4$} & 32 & 8 & {$\sigma'_{3}$},  {$\sigma'_{11}$} $(+, -)$ \\
\hline
$FB_c \mathcal{O}_{7}$  & {$D_8$} & 16 & 16 & {$\sigma'_{6}$},   {$\sigma'_{11}$} $(-, -)$ \\
\hline
	\end{tabular}
\egroup
\end{center}
\vspace*{-0.6cm} 
\caption{{Data of $FB_c \mathcal{O}_k$, $k = 1,  \ldots, 29$}}
\label{combstruct4}
\end{table}}

{\begin{table}[h!]
\begin{center}
\bgroup \scriptsize 
\def\arraystretch{1.5}
    \begin{tabular}[]{ | c | c | c | c | c |} 
		\hline
		\multirow{2}{*}{f-tilings} & \multirow{2}{*}{\parbox[c]{2cm}{\centering Symmetry group}} & \multicolumn{2}{c |}{Prototiles} & \multirow{2}{*}{\parbox[c]{2cm}{\centering Generators (Determinants)}} \\
		\cline{3-4}
		 &  & $\# \triangle abc$ & $\# \triangle \bar{b}c^2$ &  \\
		 \hhline{|=====|} 
$FB_a \mathcal{O}_{2}$  & $D_4$ & 32 & 8 &  {$\sigma'_{7}$},  {$\sigma'_{13}$} $(+,+)$ \\  \hline
$FB_a \mathcal{O}_{1}$ & $D_8$ & 16 & 16 & {$\sigma'_8$},  {$\sigma'_{13}$} $(-,+)$ \\
\hline
	\end{tabular}
\egroup
\end{center}
\vspace*{-0.6cm} 
\caption{{Data of $FB_a \mathcal{O}_k$, $k = 1, 2$}}
\label{combstruct5}
\end{table}}

\clearpage

\end{document}